\DeclareMathOperator*{\Prob}{{\sf P}}
\DeclareMathOperator*{\ME}{{\sf E}}
\DeclareMathOperator*{\sign}{sign}
\DeclareMathOperator*{\zonefun}{z_1}
\DeclareMathOperator*{\cdf}{cdf}
\newcommand{\econst}{\mathrm{e}}
\newcommand{\der}{\mathrm{d}}
\newcommand{\dert}{\der t}
\newcommand{\Xobs}{X^{\rm obs}}
\newcommand{\varmode}{M}
\newtheorem{thm}{Theorem}
\newtheorem{lemma}[thm]{Lemma}
\newtheorem{corollary}[thm]{Corollary}
\begin{document}
\begin{frontmatter}

\title{Identifiability of logistic regression with homoscedastic error:
Berkson model}

\author{\inits{S.}\fnm{Sergiy}\snm{Shklyar}}\email{shklyar@univ.kiev.ua}
\address{Taras Shevchenko National University of Kyiv, Ukraine}

\markboth{S. Shklyar}{Identifiability of logistic regression with
homoscedastic error: Berkson model}

\begin{abstract}
We consider the Berkson model of logistic regression
with Gaussian and homosce\-dastic error in regressor.
The measurement error variance can be either known or unknown.
We deal with both functional and structural cases.
Sufficient conditions for
identifiability of regression coefficients are presented.

Conditions for identifiability of the model are studied.
In the case where the error variance is known,
the regression parameters are identifiable
if the distribution of the observed regressor is not concentrated
at a single point.
In the case where the error variance is not known,
the regression parameters are identifiable
if the distribution of the observed regressor
is not concentrated at three (or less) points.

The key analytic tools are relations between
the smoothed logistic distribution function
and its derivatives.
\end{abstract}

\begin{keyword}
Logistic regression \sep
binary regression \sep
errors in variables \sep
Berkson model \sep
regression calibration model

\MSC[2010] 62J12
\end{keyword}

\received{20 May 2015}
\revised{19 June 2015}
\accepted{20 June 2015}
\publishedonline{7 July 2015}
\end{frontmatter}

\section{Introduction}\label{sec:intro}
\paragraph{Statistical model}
Consider logistic regression with Berkson-type error in the
explanatory variable. One trial is distributed as follows.
$\Xobs_n$ is the observed (or assigned) surrogate regressor.
The true regressor is $X_n = \Xobs_n + U_n$,
where the error $U_n\sim N(0,\tau^2)$ is
independent of $\Xobs_n$.
The response $Y_n$ is a binary random variable and attains
either $0$ or $1$ with
\[
\Prob\bigl(Y_n {=} 1 \bigm|\Xobs_n, X_n\bigr) =
\frac{\exp(\beta_0 + \beta_1 X_n)}{1 + \exp(\beta_0 + \beta_1 X_n)} .
\]

We consider both functional model and structural model.
In the functional one, $\Xobs_n$ are nonrandom variables, and
in the structural one, $\Xobs_n$ are i.i.d.,
and therefore in the latter model, $(\Xobs_n, X_n, Y_n)$ are i.i.d.\@
random triples.

The couples $(\Xobs_n, Y_n)$, $n=1,\ldots,N$, are observed.
Vector $\vec\beta= (\beta_0, \beta_1)^\top$ is a parameter of interest.

The error variance $\tau^2$ can be either known or unknown, and
we consider both cases.
The conditions for identifiability of the model (or of the
parameter~$\vec\beta$)
are \xch{presented.}{presented}

\paragraph{Overview}
Berkson models of logistic regression and probit regression were set up
in \citet{Burr83}. For probit regression, it is shown that the
introduction of Berkson-type error is equivalent to augmentation of
regression parameters. As a consequence, the Berkson model of probit
regression is identifiable if $\tau^2$ is known and is not identifiable
if $\tau^2$ is not known.

The identifiability of the classical model was studied by \citet
{Kuech1995}. He assumes that both the regressor and measurement error
are normally distributed. Then univariate logistic regression is
identifiable (here $\tau^2$ can be unknown), and multiple logistic
regression is not identifiable. Our results can be proved similarly to
\cite{Kuech1995} if we assume that the distribution of the surrogate
regressor $\Xobs$ has an unbounded support.\looseness=-1

For classification of errors-in-variables regression models and various
estimation methods, see the monograph by \citet{CaRuSt}.

Identifiability of the statistical model can be used in the proof of
consistency of the estimator. For known $\tau^2$, the strong
consistency of the maximum likelihood estimator is obtained by \citet
{ShklyarS}. But if $\tau^2$ is not known, the maximum likelihood
estimator seems to be unstable
(see discussion in \cite{CaRuSt} or~\cite{Kuech1995}).

\section{Convolution of logistic function with normal density}
Consider the function
%
\begin{equation}
\label{eq:def-L0} L_0 \bigl(x, \sigma^2 \bigr) = \ME
\frac{\exp(x - \xi)}{1 + \exp(x - \xi)}, \quad \xi\sim N \bigl(0,\sigma^2 \bigr), \ x \in
\mathbb{R}, \ \sigma^2 \ge0 ,
\end{equation}
that is, $L_0(x, 0) = \econst^x / (1 + \econst^x)$ and
\[
L_0 \bigl(x, \sigma^2 \bigr) = \frac{1}{\sqrt{2 \pi} \sigma} \int
_{-\infty}^{\infty} \frac{\exp(x - t)}{1 + \exp(x - t)} \econst^{- t^2 / (2 \sigma^2)}
\, \dert\quad\mbox{for} \ \sigma^2 > 0 .
\]

Denote the derivatives w.r.t.\@ $x$
%
\begin{equation}
\label{eq:def-Lk} L_k \bigl(x, \sigma^2 \bigr) =
\frac{\partial^k}{\partial x^k} L_0 \bigl(x, \sigma^2 \bigr) .
\end{equation}

Differentiation of $L_k(x, \sigma^2)$ with respect to the second
argument is described in Appendix \ref{Appendix-A}.

The distribution of $Y_i$ given $\Xobs_i$ is
\begin{align}
\Prob\bigl[ Y_i = 1 \bigm|\Xobs_i\bigr] &= \ME \bigl[
\Prob\bigl[Y_i = 1 \bigm|\Xobs_i, X_i\bigr] \bigm|
\Xobs_i \bigr]
\nonumber
\\
&= \ME \biggl[ \frac{\exp(\beta_0 + \beta_1 X_i)}{
1+\exp(\beta_0 + \beta_1 X_i)} \biggm| \Xobs_i \biggr] = L \bigl(
\beta_0 + \beta_1 \Xobs_i, \:
\beta_1^2 \tau^2 \bigr) \label{eq:condprobYiXobsi}
\end{align}
since $[\beta_0 + \beta_1 X_i \mid\Xobs_i] \sim N(\beta_0 + \beta_1
\Xobs_i, \: \beta_1^2 \tau^2)$.

\section[Identifiability when error variance known]
{Identifiability when $\tau^2$ is known}
\begingroup\abovedisplayskip=11pt\belowdisplayskip=11pt
\begin{thm}\label{thm-func-known-i}
If in the functional model not all $\Xobs$ are equal, then the model is
identifiable.
\end{thm}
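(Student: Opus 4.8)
The plan is to reduce identifiability to a single-crossing statement for two smoothed logistic distribution functions. In the functional model the $\Xobs_n$ are fixed and the $Y_n$ are independent Bernoulli variables whose means are given by~\eqref{eq:condprobYiXobsi}; hence the law of the data is completely determined by the vector of success probabilities $\bigl(L_0(\beta_0+\beta_1\Xobs_n,\beta_1^2\tau^2)\bigr)_n$, and only the distinct values among the $\Xobs_n$ matter. Thus the model is identifiable precisely when the map $\vec\beta\mapsto\bigl(L_0(\beta_0+\beta_1 x,\beta_1^2\tau^2)\bigr)_{x\in\{x_1,\dots,x_m\}}$ is injective, where $x_1<\dots<x_m$ are the distinct observed values and $m\ge2$. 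It suffices to prove injectivity already from two fixed distinct points $a<b$.

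First I would record the elementary properties of $L_0$: it satisfies $0<L_0<1$, it is strictly increasing in its first argument (so $L_1>0$), it is symmetric in the sense $L_0(x,\sigma^2)+L_0(-x,\sigma^2)=1$, and it tends to $0$ and $1$ as $x\to\mp\infty$. From strict monotonicity, $x\mapsto L_0(\beta_0+\beta_1 x,\beta_1^2\tau^2)$ is constant iff $\beta_1=0$; in that case the common value equals $\econst^{\beta_0}/(1+\econst^{\beta_0})$ and determines $\beta_0$, so the model is identifiable on $\{\beta_1=0\}$, and a nonconstant competitor cannot match a constant one at two points. For $\beta_1\ne0$ the regression function is strictly monotone, increasing for $\beta_1>0$ and decreasing for $\beta_1<0$; two parameter vectors that agree at $a$ and $b$ must therefore share the sign of $\beta_1$, and the symmetry relation lets me replace $(\beta_0,\beta_1)$ by $(-\beta_0,-\beta_1)$ so as to assume $\beta_1,\beta_1'>0$.

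The heart of the argument is to show that two distinct parameter vectors with $\beta_1,\beta_1'>0$ cannot produce regression functions that agree at two points. I would read $t\mapsto L_0(\beta_0+\beta_1 t,\beta_1^2\tau^2)$ as the distribution function $F$ of $W=(Z-\beta_0)/\beta_1+\eta$, where $Z$ is standard logistic and $\eta\sim N(0,\tau^2)$ is independent with the \emph{known} variance $\tau^2$; write $F'$ for the competitor. Agreement at $a$ and $b$ means $F-F'$ vanishes at two points, while $F-F'\to0$ at $\pm\infty$. Its derivative $(F-F')'$ is the difference of two smoothed logistic densities, $\beta_1 L_1(\beta_0+\beta_1 t,\beta_1^2\tau^2)-\beta_1' L_1(\beta_0'+\beta_1' t,\beta_1'^2\tau^2)$. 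The crucial claim is that this density difference changes sign at most twice; granting it, an elementary sign-counting argument (using $F-F'\to0$ at both ends and $\int(F-F')'=0$) forces $F-F'$ to have at most one interior zero, so two zeros are possible only if $F\equiv F'$. The main obstacle is exactly this two-sign-change bound. I expect to prove it from the relations between $L_0$ and its $x$- and $\sigma^2$-derivatives, in particular the heat-type identity $\partial L_0/\partial\sigma^2=\tfrac12 L_2$ and its higher analogues developed in Appendix~\ref{Appendix-A}, together with the log-concavity (indeed the total-positivity / Pólya-frequency property) of the logistic and Gaussian densities, which controls how the location and scale of $L_1$ may be varied without creating extra crossings.

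Finally, once $F\equiv F'$ I would recover $\vec\beta$ from the law of $W$: its mean is $-\beta_0/\beta_1$ and its variance is $\pi^2/(3\beta_1^2)+\tau^2$, which is strictly decreasing in $\beta_1>0$; hence equal laws give $\beta_1=\beta_1'$ and then $\beta_0=\beta_0'$. This yields injectivity at two points and therefore identifiability whenever not all $\Xobs$ are equal, the case $\beta_1=0$, the matching of signs, and the reduction to $\beta_1,\beta_1'>0$ having already been disposed of above.
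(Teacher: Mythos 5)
Your overall strategy is the same as the paper's: reduce identifiability to the statement that two distinct regression curves $x\mapsto L_0(\beta_0+\beta_1 x,\beta_1^2\tau^2)$ cannot agree at two distinct design points, i.e.\ to a crossing-number bound for smoothed logistic distribution functions. The peripheral reductions you carry out (only distinct $\Xobs$ values matter; the case $\beta_1=0$; matching signs of the slopes via monotonicity; the symmetry $L_0(x,\sigma^2)+L_0(-x,\sigma^2)=1$ to normalize to positive slopes; recovering $\vec\beta$ from the law of $W$ at the end) are all correct, but they are routine. The entire mathematical content of the theorem sits in the one step you do not prove: that the density difference $\beta_1 L_1(\beta_0+\beta_1 t,\beta_1^2\tau^2)-\beta_1' L_1(\beta_0'+\beta_1' t,\beta_1'^2\tau^2)$ changes sign at most twice (equivalently, that the two distribution functions cross at most once). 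You explicitly defer this (``I expect to prove it from \dots''), so the proposal as written has a genuine gap. The paper closes exactly this gap by invoking Lemma~4.1 of \cite{ShklyarS}, which states directly that the equation $L_0(\beta_0^{(1)}+\beta_1^{(1)}x,(\beta_1^{(1)})^2\tau^2)=L_0(\beta_0^{(2)}+\beta_1^{(2)}x,(\beta_1^{(2)})^2\tau^2)$ has at most one solution unless the parameters coincide; given that, two distinct $\Xobs_i$ already yield a contradiction and none of your auxiliary reductions are needed.

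A further caution about the tools you propose for the missing step: the variation-diminishing property of the Gaussian kernel would only reduce the problem to counting sign changes of the difference of two location--scale logistic densities, and for the logistic family (unlike the normal, where the log-ratio is a quadratic) showing that this difference has at most two sign changes is itself a nontrivial claim requiring its own argument; log-concavity alone does not deliver it. Note also that in this model the Gaussian component of $W=(Z-\beta_0)/\beta_1+\eta$ has the \emph{fixed} variance $\tau^2$ while the logistic component is rescaled by $1/\beta_1$, so the two competing densities are not related by a pure location--scale change, which is precisely why the known-$\tau^2$ case behaves differently from the unknown-$\tau^2$ case treated in Lemma~\ref{lem-eq-L0-sigs}. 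Any complete proof must use this structure; your sketch does not yet engage with it.
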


\begin{proof}
Suppose that for two values of parameters
$\vec\beta^{(1)} = (\beta^{(1)}_0, \beta^{(1)}_1)$ and
$\vec\beta^{(2)} = (\beta^{(2)}_0, \beta^{(2)}_1)$,
$\vec\beta^{(1)} \neq\vec\beta^{(2)}$,
the distributions of observations are equal.
Then for all $i = 1, 2,\ldots, N$,
\begin{gather*}
\Prob\nolimits_{\vec\beta^{(1)}} (Y_i = 1) = \Prob
\nolimits_{\vec\beta^{(2)}} (Y_i = 1),
\\
L_0 \bigl(\beta^{(1)}_0 +
\beta^{(1)}_1 \Xobs_i, \bigl(
\beta_1^{(1)}\bigr)^2 \tau^2 \bigr)
= L_0 \bigl(\beta^{(2)}_0 +
\beta^{(2)}_1 \Xobs_i, \bigl(
\beta_1^{(2)}\bigr)^2 \tau^2 \bigr)
.
\end{gather*}
However, by Lemma 4.1 from \cite{ShklyarS}
the equation
\[
L_0 \bigl(\beta^{(1)}_0 +
\beta^{(1)}_1 x, \bigl(\beta_1^{(1)}
\bigr)^2 \tau^2 \bigr) = L_0 \bigl(
\beta^{(2)}_0 + \beta^{(2)}_1 x,
\bigl(\beta_1^{(2)}\bigr)^2 \tau^2
\bigr)
\]
has no more than one solution $x$.
Hence, all $\Xobs_i$ are equal.
\end{proof}

By definition the degenerate distribution is the distribution
concentrated at a single point. For the next theorem, see the proof of
Theorem 5.1 in \cite{ShklyarS}.
\begin{thm}[\cite{ShklyarS}]
If in the structural model the distribution of $\Xobs_1$ is not degenerate,
then the parameter $\vec\beta$ is identifiable.\vspace*{-2pt}
\end{thm}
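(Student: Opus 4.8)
The plan is to reduce the structural case to the same analytic fact that settled the functional case, namely Lemma 4.1 from \cite{ShklyarS}, while keeping careful track of what identifiability means measure-theoretically. First I would make the observation law explicit. Since the triples $(\Xobs_n, Y_n)$ are i.i.d., the entire law of the observations is determined by the joint distribution of one couple $(\Xobs_1, Y_1)$, which factors into the marginal law $P_X$ of $\Xobs_1$ and the conditional probability $x \mapsto \Prob[Y_1 = 1 \mid \Xobs_1 = x]$. Suppose two parameter values $\vec\beta^{(1)} \neq \vec\beta^{(2)}$ (each paired with some law of $\Xobs_1$) produce the same distribution of observations. Then the two marginals of $\Xobs_1$ coincide — call the common non-degenerate law $P_X$ — and the two conditional probabilities agree for $P_X$-almost every $x$.

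Next I would invoke \eqref{eq:condprobYiXobsi}, which expresses the conditional probability as $L_0(\beta_0 + \beta_1 x, \beta_1^2 \tau^2)$. Because $\tau^2$ is known and fixed, equality of the conditional probabilities $P_X$-almost everywhere reads
\[
L_0\bigl(\beta^{(1)}_0 + \beta^{(1)}_1 x, \bigl(\beta^{(1)}_1\bigr)^2 \tau^2\bigr)
= L_0\bigl(\beta^{(2)}_0 + \beta^{(2)}_1 x, \bigl(\beta^{(2)}_1\bigr)^2 \tau^2\bigr)
\]
for $P_X$-almost all $x$. This is exactly the equation to which Lemma 4.1 from \cite{ShklyarS} applies: for $\vec\beta^{(1)} \neq \vec\beta^{(2)}$ it has at most one solution $x$.

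The concluding step is where non-degeneracy enters, and it is here that the argument genuinely differs from the functional case. In the functional model the equation had to hold at each of at least two distinct fixed design points, giving an immediate contradiction. Here it only holds $P_X$-almost surely, so I would argue at the level of the solution set $S$, which must satisfy $P_X(S) = 1$. If $S$ were empty this is impossible, and if $S$ consisted of a single point $\{x_0\}$, then $P_X(\{x_0\}) = 1$, i.e.\@ $P_X$ would be degenerate, contrary to hypothesis. Hence $S$ must contain at least two points, contradicting the ``at most one solution'' conclusion of Lemma 4.1. Therefore $\vec\beta^{(1)} = \vec\beta^{(2)}$, and $\vec\beta$ is identifiable.

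I expect the only real obstacle to be this last piece of measure-theoretic bookkeeping: passing from ``the conditional probabilities agree $P_X$-a.e.\@'' to a statement about the cardinality of the solution set, and verifying that a non-degenerate law cannot place its entire mass on the single point that Lemma 4.1 permits. All the analytic content is already packaged in that lemma, so no fresh estimates on $L_0$ or its derivatives are required.
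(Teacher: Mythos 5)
Your proposal is correct and follows essentially the route the paper intends: the paper gives no proof of its own here but defers to Theorem 5.1 of \cite{ShklyarS}, and its own proof of the analogous structural result for unknown $\tau^2$ (Theorem~\ref{prop-indent-scalunk}) uses exactly your scheme --- almost-sure equality of the conditional probabilities $L_0(\beta_0+\beta_1 \Xobs_1, \beta_1^2\tau^2)$, followed by the solution-count bound from Lemma 4.1 of \cite{ShklyarS} and the observation that a non-degenerate law cannot charge a set of at most one point with full mass. No gap.
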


\section[Identifiability when error variance unknown]
{Identifiability when $\tau^2$ is unknown}\vspace*{-2pt}
For fixed $\sigma^2$, the function $L_0(x, \sigma^2)$ is a bijection
$\mathbb{R} \to(0,\: 1)$.
{Hence}, for fixed $\sigma_1^2$ and $\sigma_2^2$,
the relation
%
\begin{equation}
L_0 \bigl(y, \sigma_1^2 \bigr) =
L_0 \bigl(x, \sigma_2^2 \bigr) \label{eq-L1700}
\end{equation}
\endgroup
sets the bijection $\mathbb{R} \to\mathbb{R}$; see Fig. \ref{fig:4xy}.

\begin{lemma}
\label{lem-d2-eq1700}
For fixed $\sigma_1^2 \ge0$ and $\sigma_2^2 \ge0$,
the sign of the second derivative of the implicit function \eqref{eq-L1700}
is
\[
\sign \biggl( \frac{\der^2 y}{\der x^2} \biggr) = \sign \bigl(\sigma_2^2
- \sigma_1^2 \bigr) \sign(x).
\]
\end{lemma}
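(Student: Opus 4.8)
The plan is to differentiate the defining relation \eqref{eq-L1700} implicitly twice and to reduce the sign of $y''$ to a monotonicity statement in the variance argument. Writing $F(x,y)=L_0(y,\sigma_1^2)-L_0(x,\sigma_2^2)$, one implicit differentiation gives $\frac{\der y}{\der x} = L_1(x,\sigma_2^2)/L_1(y,\sigma_1^2)$, which is positive since $L_1>0$; this already re-proves that \eqref{eq-L1700} is an increasing bijection. Differentiating once more and clearing the positive factor $L_1(x,\sigma_2^2)^2\,L_1(y,\sigma_1^2)^2$, I expect
\[
\sign\!\left(\frac{\der^2 y}{\der x^2}\right) = \sign\!\left(\frac{L_2(x,\sigma_2^2)}{L_1(x,\sigma_2^2)^2} - \frac{L_2(y,\sigma_1^2)}{L_1(y,\sigma_1^2)^2}\right).
\]
Since $x$ and $y$ lie on the curve \eqref{eq-L1700}, they share the common level $p=L_0(x,\sigma_2^2)=L_0(y,\sigma_1^2)$, so the two bracketed terms are one and the same quantity $h:=L_2/L_1^2$ evaluated at the two $p$-matched points $(x,\sigma_2^2)$ and $(y,\sigma_1^2)$.

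This recasts the claim as a monotonicity-in-$\sigma^2$ statement: I want $h$, viewed along a fixed level $p$, to increase in $\sigma^2$ when the argument is positive and to decrease when it is negative. To differentiate at fixed $p$, I would use the heat-type relation $\partial L_k/\partial\sigma^2 = \frac{1}{2}L_{k+2}$ from Appendix~\ref{Appendix-A}. Holding $L_0(x,\sigma^2)=p$ fixed forces $\partial x/\partial\sigma^2 = -L_2/(2L_1)$, and pushing this through the chain rule I expect the total $\sigma^2$-derivative of $h$ to collapse, after simplification, to a positive multiple of $\Phi := L_1^2 L_4 - 3L_1 L_2 L_3 + 2L_2^3$. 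The clean way to read $\Phi$ is to notice that $\Phi = L_1^3\,(\log L_1)'''$, where primes denote $x$-derivatives; hence its sign is exactly the sign of the third $x$-derivative of the log-density $\log L_1$.

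Symmetry then handles the bookkeeping. Because $L_0(-x,\sigma^2)=1-L_0(x,\sigma^2)$, the density $L_1$ is even in $x$, so $L_2,L_4$ are odd and $L_3$ even; consequently $\Phi$ is odd and vanishes at $x=0$, matching the $\sign(x)$ factor in the statement. The same symmetry shows that $y$ is an odd function of $x$ (if $(x,y)$ solves \eqref{eq-L1700}, so does $(-x,-y)$), whence $y''$ is odd and it suffices to settle the case $x>0$; the degenerate case $\sigma_1^2=\sigma_2^2$ gives $y=x$ and $y''\equiv 0$.

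The hard part is the analytic core: proving $\Phi>0$, i.e. $(\log L_1)'''>0$, for $x>0$. For the unsmoothed logistic ($\sigma^2=0$) this is immediate, since there $(\log L_1)' = -\tanh(x/2)$ and $(\log L_1)''' = \frac{1}{2}\,\mathrm{sech}^2(x/2)\tanh(x/2)>0$ for $x>0$, so the base case holds. The obstacle is to extend this to all $\sigma^2>0$, where $L_1$ is a Gaussian convolution of the logistic density and the convenient closed forms disappear. Here I would lean on the relations between $L_0$ and its $x$-derivatives announced in the abstract, either to exhibit $\Phi$ as a manifestly positive combination of the $L_k$ for $x>0$, or to show that the sign of $(\log L_1)'''$ is preserved as $\sigma^2$ grows from $0$ by tracking its $\sigma^2$-evolution through $\partial L_k/\partial\sigma^2 = \frac{1}{2}L_{k+2}$. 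Establishing this positivity of $\Phi$ for $x>0$ is where essentially all the difficulty lies.
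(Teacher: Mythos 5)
Your reduction is exactly the paper's: implicit differentiation twice to get
$\sign(y'') = \sign\bigl(\tfrac{L_2(x,\sigma_2^2)}{L_1(x,\sigma_2^2)^2} - \tfrac{L_2(y,\sigma_1^2)}{L_1(y,\sigma_1^2)^2}\bigr)$, then viewing $h = L_2/L_1^2$ along the fixed level $p$ and differentiating in the variance via $\partial L_k/\partial v = \tfrac12 L_{k+2}$ together with $\partial x/\partial v = -L_2/(2L_1)$, arriving at a positive multiple of $\Phi = L_4L_1^2 - 3L_3L_2L_1 + 2L_2^3 = L_1^3(\ln L_1)'''$. All of that is correct and matches the paper step for step, including the identification of $\Phi$ with the third logarithmic derivative of the density and the symmetry bookkeeping.

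The genuine gap is that you never prove the key inequality $\sign(\Phi)=\sign(x)$ for $\sigma^2>0$; you verify only the closed-form case $\sigma^2=0$ and then name two possible strategies without carrying either out. This inequality is Lemma~\ref{lem-key-ineq4} of the paper and is the entire content of Appendix~\ref{Appendix-B}: the paper proves it by observing that $(\ln L_1)''' = \sigma^{-6}\,\mu_3[\eta \mid \eta+\xi = x]$, the third conditional central moment of the logistic component $\eta$ given the sum $\eta+\xi=x$, and then establishing positivity of that moment through a unimodality-and-asymmetry criterion (Lemma~\ref{lem-m3p}, supported by the four inequalities of Lemma~\ref{lem-aux-L4L3L2L1} for the conditional log-density $F$). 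Your second suggested route --- propagating the sign of $(\ln L_1)'''$ from $\sigma^2=0$ via the heat-type relation --- is not obviously workable, because $\partial L_k/\partial v = \tfrac12 L_{k+2}$ couples every order to higher ones and gives no closed evolution equation for $\Phi$ whose sign could be tracked. Without a proof of this positivity the lemma is not established; everything before it is a correct but routine reduction.
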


\begin{proof}
Differentiating (\ref{eq-L1700}), we get
\begin{align*}
L_1 \bigl(y, \sigma_1^2 \bigr) \, \der y&
= L_1 \bigl(x, \sigma_2^2 \bigr) \, \der x;
\\
\frac{\der y}{\der x} &= \frac{L_1(x, \sigma_2^2)} {L_1(y, \sigma_1^2)}.
\end{align*}
Then
\begin{align*}
\frac{\der^2 y}{\der x^2} & = \frac{L_2(x, \sigma_2^2) L_1(y, \sigma_1^2) -
L_1(x, \sigma_2^2) L_2(y, \sigma_1^2) \frac{\der y}{\der x}}{
L_1(y,\sigma_1^2)^2}
\\
&= \frac{L_2(x, \sigma_2^2) L_1(y,\sigma_1^2)^2 -
L_1(x, \sigma_2^2)^2 L_2(y, \sigma_1^2)}{
L_1(y,\sigma_1^2)^3}
\\
&= \biggl( \frac{L_2(x,\sigma_2^2)}{L_1(x,\sigma_2^2)^2} - \frac{L_2(y,\sigma_1^2)}{L_1(y,\sigma_1^2)^2} \biggr) \cdot
\frac{L_1(x,\sigma_2^2)^2}{L_1(y,\sigma_1^2)}.
\end{align*}
Thus,
%
\begin{equation}
\sign \biggl(\frac{\der^2 y}{\der x^2} \biggr) = \sign \biggl( \frac{L_2(x,\sigma_2^2)}{L_1(x,\sigma_2^2)^2} -
\frac{L_2(y,\sigma_1^2)}{L_1(y,\sigma_1^2)^2} \biggr) . \label{eq-1750}
\end{equation}

\begin{figure}
\includegraphics{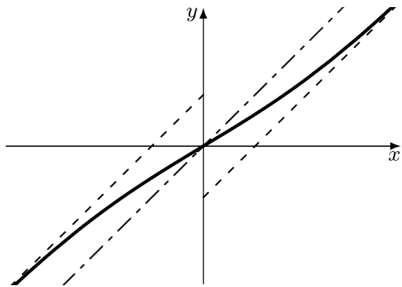}
\caption{The plot to equation $L_0(y, \sigma_1^2) = L_0(x, \sigma_2^2)$
for $\sigma_1^2 < \sigma_2^2$}\label{fig:4xy}
\end{figure}

Denote by $\mu(z, \sigma^2)$ the solution to the equation
$L_0(\mu,\sigma^2) = z$. Note that as $L_0(x,\sigma^2)$ is
the cdf of a symmetric distribution,
$\sign(L_0(x,\sigma^2)-0.5) = \sign(x)$. Therefore,
$\sign(\mu(z,\sigma^2)) = \sign(z-0.5)$.
Find the derivative
\[
\frac{\der}{\der v} \biggl( \frac{L_2(\mu(z,v),v)}{L_1(\mu(z,v),v)^2} \biggr)
\]
for fixed $z$.
By the implicit function theorem,
\[
\frac{\der\mu(z,v)}{\der v} = - \frac{L_2(\mu(z,v),v)}{2 L_1(\mu(z,v),v)};
\]
also,
\begin{align*}
\frac{\partial}{\partial x} \biggl(\frac{L_2(x,v)}{L_1(x,v)^2} \biggr)& = \frac{L_3(x,v) L_1(x,v) - 2 L_2(x,v)^2}{L_1(x,v)^3},
\\
\frac{\partial}{\partial v} \biggl(\frac{L_2(x,v)}{L_1(x,v)^2} \biggr) &= \frac{L_4(x,v) L_1(x,v) - 2 L_2(x,v) L_3(x,v)}{2 L_1(x,v)^3}.
\end{align*}
Then
\begin{align*}
\frac{\der}{\der v} \biggl( \frac{L_2(\mu(z,v),v)}{L_1(\mu(z,v),v)^2} \biggr) &= - \frac{L_2}{2L_1}
\cdot \frac{L_3 L_1 - 2 L_2^2}{L_1^3} + \frac{L_4 L_1 - 2 L_2 L_3}{2 L_1^3}
\\
&= \frac{L_4 L_1^2 - 3 L_3 L_2 L_1 + 2 L_2^3}{2 L_1^4} ,
\end{align*}
where $L_k$ are evaluated at the point $(\mu(z,v),v)$.
By Lemma~\ref{lem-key-ineq4},
\[
\sign \biggl( \frac{\der}{\der v} \biggl( \frac{L_2(\mu(z,v),v)}{L_1(\mu(z,v),v)^2} \biggr) \biggr) =
\sign \bigl(\mu(z,v) \bigr) = \sign(z-0.5) .
\]

The function $v \mapsto\frac{L_2(\mu(z,v),v)}{L_1(\mu(z,v),v)^2}$
is monotone (it is increasing for $z>0.5$ and decreasing for $z<0.5$).
For $x$ and $y$ satisfying (\ref{eq-L1700}),
\[
x = \mu \bigl(z,\sigma_2^2 \bigr) \quad\mbox{and}\quad y
= \mu \bigl(z,\sigma_1^2 \bigr)
\]
with $z=L_0(y,\sigma_1^2)=L_0(x,\sigma_2^2)$;
note that $\sign(z-0.5)=\sign(x)$.
Then
\[
\sign \biggl( \frac{L_2(x,\sigma_2^2)}{L_1(x,\sigma_2^2)^2} - \frac{L_2(y,\sigma_1^2)}{L_1(y,\sigma_1^2)^2} \biggr) = \sign \bigl(
\sigma_2^2 - \sigma_1^2 \bigr)
\sign(x),
\]
and with (\ref{eq-1750}),
we can obtain the desired equality
\begin{equation*}
\sign \biggl(\frac{\der^2 y}{\der x^2} \biggr) = \sign \bigl(\sigma_2^2
- \sigma_1^2 \bigr) \sign(x). \qedhere
\end{equation*}
\end{proof}

\begin{lemma}
\label{lem-eq-L0-sigs}
The equation
%
\begin{equation}
L_0 \bigl(\beta^{(1)}_0 +
\beta^{(1)}_1 x, \: \sigma_1^2
\bigr) = L_0 \bigl(\beta^{(2)}_0 +
\beta^{(2)}_1 x, \: \sigma_2^2
\bigr) \label{eq-L1703}
\end{equation}
has no more than three solutions, unless either
%
\begin{equation}
\label{eq:excase1} \vec\beta^{(1)} = \vec\beta^{(2)} \quad
\mbox{and}\quad \sigma_1^2 = \sigma_2^2
\end{equation}
or
%
\begin{equation}
\label{eq:excase2} \beta^{(1)}_1 = \beta^{(2)}_1
= 0 \quad\mbox{and}\quad L_0 \bigl(\beta_0^{(1)},
\sigma_1^2 \bigr) = L_0 \bigl(
\beta_0^{(2)}, \sigma_2^2 \bigr) .
\end{equation}
In exceptional cases \eqref{eq:excase1} and \eqref{eq:excase2},
equation~\eqref{eq-L1703} is an identity.
\end{lemma}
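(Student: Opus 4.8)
The plan is to exploit the strict monotonicity of $L_0(\cdot,\sigma^2)$ together with the sign information on the second derivative supplied by Lemma~\ref{lem-d2-eq1700}. Since $L_0(\cdot,\sigma^2)\colon\mathbb{R}\to(0,1)$ is a bijection for every fixed $\sigma^2$, I first dispose of the degenerate configurations. If $\beta^{(1)}_1=\beta^{(2)}_1=0$, both sides of \eqref{eq-L1703} are constants, and the equation is either an identity---precisely when $L_0(\beta_0^{(1)},\sigma_1^2)=L_0(\beta_0^{(2)},\sigma_2^2)$, which is case~\eqref{eq:excase2}---or has no solution at all. If exactly one of $\beta^{(1)}_1,\beta^{(2)}_1$ vanishes, one side is constant while the other is a strictly monotone bijection in $x$, so there is exactly one solution. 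This leaves the main case $\beta^{(1)}_1\neq0$, $\beta^{(2)}_1\neq0$.

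In the main case I introduce $X=\beta_0^{(2)}+\beta_1^{(2)}x$ as a new variable; since $x\mapsto X$ is affine and invertible, it preserves the number of solutions, and $\beta_0^{(1)}+\beta_1^{(1)}x=a+bX$ for suitable constants $a,b$ with $b=\beta_1^{(1)}/\beta_1^{(2)}\neq0$. Let $\phi$ be the implicit function of Lemma~\ref{lem-d2-eq1700}, defined by $L_0(\phi(X),\sigma_1^2)=L_0(X,\sigma_2^2)$. By injectivity of $L_0(\cdot,\sigma_1^2)$, equation~\eqref{eq-L1703} is then equivalent to $a+bX=\phi(X)$, that is, to the vanishing of $h(X)=a+bX-\phi(X)$.

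If $\sigma_1^2=\sigma_2^2$, then $\phi$ is the identity and $h$ is affine, so $h$ has at most one zero unless $b=1$ and $a=0$; the latter forces $\vec\beta^{(1)}=\vec\beta^{(2)}$, which together with $\sigma_1^2=\sigma_2^2$ is exactly the identity case~\eqref{eq:excase1}. If instead $\sigma_1^2\neq\sigma_2^2$, Lemma~\ref{lem-d2-eq1700} gives $\sign\bigl(h''(X)\bigr)=-\sign(\sigma_2^2-\sigma_1^2)\sign(X)$, so $h''$ is nonzero off the origin and changes sign exactly once, at $X=0$. Hence $h'$ is strictly monotone on each half-line and is unimodal overall, so it has at most two zeros; by Rolle's theorem $h$ then has at most three zeros. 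This is the analytic core of the argument, and the only place where the curvature estimate of Lemma~\ref{lem-d2-eq1700} is actually used.

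It remains to assemble the bookkeeping. The two identity possibilities collected above, namely \eqref{eq:excase1} from the linear ($\sigma_1^2=\sigma_2^2$) branch and \eqref{eq:excase2} from the constant branch, are immediately seen to make \eqref{eq-L1703} an identity; in every other configuration the counts obtained (one, three, or none) are at most three. The step I expect to require the most care is not any single estimate but the exhaustive verification that these two families are the \emph{only} ones producing an identity and that the exceptional conditions stated in \eqref{eq:excase1}--\eqref{eq:excase2} match the degenerate branches exactly, including the harmless overlap arising when $\beta^{(1)}_1=\beta^{(2)}_1=0$ with equal constants.
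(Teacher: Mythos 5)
Your proposal is correct and follows essentially the same route as the paper: dispose of the zero-slope and equal-variance configurations by strict monotonicity of $L_0(\cdot,\sigma^2)$, then in the generic case make the affine substitution, reduce to the zero set of $h(X)=a+bX-\phi(X)$, and use the sign of $h''$ from Lemma~\ref{lem-d2-eq1700} to bound the zeros of $h'$ by two and hence those of $h$ by three. The only difference is cosmetic bookkeeping in how the cases are partitioned (the paper's Case~4 also absorbs $\beta_1^{(1)}=0$ with $\sigma_1^2\neq\sigma_2^2$, which you instead settle by the bijection argument), and both orderings are valid.
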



\begin{proof}
The proof has the following idea: if a twice differentiable function
$y(x)$ satisfies \eqref{eq-L1700}, then the plot of the function
either is a straight line (if $\sigma_1^2 = \sigma_2^2$)
or intersects any straight line at no more than three points.

Consider four cases.

\textit{Case 1.} $\sigma_1^2 = \sigma_2^2$.
Since the function $L_0(z, \sigma^2)$ is strictly increasing in $z$,
Eq.~\eqref{eq-L1703} is equivalent to
\[
\beta_0^{(1)} + \beta_1^{(1)} x =
\beta_0^{(2)} + \beta_1^{(2)} x .
\]
Equation \eqref{eq-L1703} has only one solution if
$\beta_1^{(1)} \neq\beta_1^{(2)}$;
it is an identity if $\vec\beta^{(1)} = \vec\beta^{(2)}$,
and it has no solutions if $\beta_1^{(1)} = \beta_1^{(2)}$
but $\beta_0^{(1)} \neq\beta_0^{(2)}$.

\textit{Case 2.} $\beta_1^{(2)} = 0$ and $\beta_1^{(1)} \neq0$.
For any fixed $\sigma^2$, the function $z \mapsto L_0(z, \sigma^2)$
is a bijection $\mathbb{R} \to(0,\, 1)$.
Denote the inverse function $\mu(Z, \sigma^2)$:
$L_0( z, \sigma^2) = Z$ if and only if $z = \mu(Z, \sigma^2)$.
Equation \eqref{eq-L1703} has a unique solution
\[
x = \frac{\mu(L_0(\beta_0^{(2)}, \sigma_2^2), \sigma_1^2) - \beta_0^{(1)}}{
\beta_1^{(1)}}.
\]

\textit{Case 3.} $\beta_1^{(2)} = \beta_1^{(1)} = 0$.
Neither side of \eqref{eq-L1703} depends on $x$.
Equation \eqref{eq-L1703} becomes
$L_0(\beta_0^{(1)}, \sigma_1^2)=
L_0(\beta_0^{(2)}, \sigma_2^2)$.
Equation \eqref{eq-L1703} either holds for all $x$
or does not hold for any $x$.

\textit{Case 4.} $\sigma_1^2 \neq\sigma_2^2$ and $\beta_1^{(2)} \neq0$.
Make a linear variable substitution: denote
$z_2 = \beta_0^{(2)} + \beta_1^{(2)} x$.
Then Eq.~\eqref{eq-L1703} becomes
%
\begin{equation}
\label{eq-L1703sbst} L_0 \biggl(\beta_0^{(1)} +
\frac{\beta_1^{(1)}}{\beta_1^{(2)}} \cdot \bigl(z_2 - \beta_0^{(2)}
\bigr), \: \sigma_1^2 \biggr) = L_0
\bigl(z_2, \sigma_2^2 \bigr) .
\end{equation}

Define the function $\zonefun(z_2)$ from the equation
\[
L_0 \bigl(\zonefun(z_2), \sigma_1^2
\bigr) = L_0 \bigl(z_2, \sigma_2^2
\bigr) .
\]
The function $\zonefun(z_2) : \mathbb{R} \to\mathbb{R}$
is implicitly defined by Eq.~\eqref{eq-L1700}: there the equality holds
if and only if
$y = \zonefun(x)$.
Hence, the function $\zonefun(z_2)$ satisfies Lemma~\ref{lem-d2-eq1700}.
Equation \eqref{eq-L1703sbst} is equivalent to
%
\begin{equation}
\zonefun(z_2) - \beta_0^{(1)} -
\frac{\beta_1^{(1)}}{\beta_1^{(2)}} \cdot \bigl(z_2 - \beta_0^{(2)}
\bigr) = 0. \label{eq-1703-equiv2}
\end{equation}
By Lemma~\ref{lem-d2-eq1700},
\begin{align*}
&\sign \biggl(\frac{\der^2}{\der z_2^2} \biggl( \zonefun(z_2) -
\beta_0^{(1)} - \frac{\beta_1^{(1)}}{\beta_1^{(2)}} \cdot
\bigl(z_2 - \beta_0^{(2)} \bigr) \biggr) \biggr)
\\
&\quad{}= \sign \biggl(\frac{\der^2 \zonefun(z_2)} {\der z_2^2} \biggr) = \sign \bigl(
\sigma_2^2 - \sigma_1^2 \bigr)
\sign(z_2) .
\end{align*}
Then the derivative of the left-hand size of \eqref{eq-1703-equiv2}
%
\begin{equation}
\label{eq-1703-equiv2-d1} \frac{\der}{\der z_2} \biggl( \zonefun(z_2) -
\beta_0^{(1)} - \frac{\beta_1^{(1)}}{\beta_1^{(2)}} \cdot
\bigl(z_2 - \beta_0^{(2)} \bigr) \biggr)
\end{equation}
is strictly monotone on both intervals $(-\infty,\: 0]$ and $[0,\:
{+}\infty)$,
and hence \eqref{eq-1703-equiv2-d1} attains 0 no more than at two points.
Then the left-hand side of \eqref{eq-1703-equiv2} has no more than
three intervals
of monotonicity, and Eq.~\eqref{eq-1703-equiv2} has no more than
three solutions. Equation \eqref{eq-L1703} has the same number of solutions.
\end{proof}


\begin{thm}
\label{thm-indentif-func}
If in the functional model there are four different $\Xobs$,
then the parameters $\vec\beta$ and $\beta_1^2\tau^2$ are identifiable.
\end{thm}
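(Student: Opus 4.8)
The plan is to read the conclusion directly off Lemma~\ref{lem-eq-L0-sigs}, which already bounds the number of crossings of the two response curves. Suppose that two parameter triples $(\beta_0^{(1)},\beta_1^{(1)},\tau_1^2)$ and $(\beta_0^{(2)},\beta_1^{(2)},\tau_2^2)$ induce the same distribution of the observations, and abbreviate $\sigma_j^2=(\beta_1^{(j)})^2\tau_j^2$ for $j=1,2$. By~\eqref{eq:condprobYiXobsi}, equality of the two observation laws means that at every observed point $\Xobs_i$,
\[
L_0\bigl(\beta_0^{(1)}+\beta_1^{(1)}\Xobs_i,\,\sigma_1^2\bigr)
=L_0\bigl(\beta_0^{(2)}+\beta_1^{(2)}\Xobs_i,\,\sigma_2^2\bigr).
\]
Since the sample contains four distinct values of $\Xobs$, the equation~\eqref{eq-L1703} associated with these two triples has at least four distinct solutions $x$.

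First I would invoke Lemma~\ref{lem-eq-L0-sigs}: more than three solutions are possible only in one of the exceptional cases~\eqref{eq:excase1} or~\eqref{eq:excase2}. It then remains to check that each exceptional case forces the two triples to agree on the identifiable quantities $\vec\beta$ and $\beta_1^2\tau^2$. Case~\eqref{eq:excase1} is immediate, since it literally asserts $\vec\beta^{(1)}=\vec\beta^{(2)}$ and $\sigma_1^2=\sigma_2^2$.

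The only point requiring care is case~\eqref{eq:excase2}, where $\beta_1^{(1)}=\beta_1^{(2)}=0$. Here I would note that $\sigma_1^2=\sigma_2^2=0$, so the two values of $\beta_1^2\tau^2$ both vanish and hence coincide (although $\tau^2$ itself is unrecoverable when $\beta_1=0$, which is harmless because the theorem only claims to identify the product $\beta_1^2\tau^2$). The surviving condition $L_0(\beta_0^{(1)},0)=L_0(\beta_0^{(2)},0)$ then yields $\beta_0^{(1)}=\beta_0^{(2)}$, since $L_0(\cdot,0)$ is the strictly increasing logistic function. Thus $\vec\beta^{(1)}=\vec\beta^{(2)}$ and $\sigma_1^2=\sigma_2^2$ in this case as well.

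Essentially all of the analytic work is already packaged in Lemma~\ref{lem-eq-L0-sigs}, so I expect no serious obstacle here. The single subtlety is bookkeeping: one must keep track that the second argument of $L_0$ throughout is $\sigma^2=\beta_1^2\tau^2$, which degenerates to $0$ exactly when $\beta_1=0$, and that this degeneracy is precisely why the theorem identifies $\beta_1^2\tau^2$ rather than $\tau^2$.
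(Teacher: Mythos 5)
Your proposal is correct and follows essentially the same route as the paper: apply Lemma~\ref{lem-eq-L0-sigs} to the equation having four solutions, note that exceptional case~\eqref{eq:excase1} gives the conclusion immediately, and in case~\eqref{eq:excase2} observe that $\beta_1^{(1)}=\beta_1^{(2)}=0$ forces $\beta_1^2\tau^2=0$ for both triples while strict monotonicity of $L_0(\cdot,0)$ gives $\beta_0^{(1)}=\beta_0^{(2)}$. Your remark that $\tau^2$ itself is unrecoverable when $\beta_1=0$ is a correct and slightly more explicit gloss on why the theorem identifies the product $\beta_1^2\tau^2$ rather than $\tau^2$.
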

\begin{proof}
Suppose that there are two sets of parameters $(\vec\beta^{(1)},(\tau^{(1)})^2)$
and $(\vec\beta^{(2)},\allowbreak(\tau^{(2)})^2)$
that for a given sample of the surrogate, the regressors
$\{X_{0n},\allowbreak\; n=1,\ldots,N\}$
provide the same distribution of $Y_n$, $n{=}1,\ldots,N$.
Then for all $n=1,\ldots, N$,
\begin{align*}
\Prob\nolimits_{\vec\beta^{(1)}, (\tau^{(1)})^2} (Y_n = 1) &= \Prob
\nolimits_{\vec\beta^{(2)},(\tau^{(2)})^2} (Y_n = 1);
\\
L_0 \bigl(\beta_0^{(1)} +
\beta_1^{(1)} \Xobs_n, \: \bigl(
\beta_1^{(1)}\bigr)^2 \bigl(\tau^{(1)}
\bigr)^2 \bigr) &= L_0 \bigl(\beta_0^{(2)}
+ \beta_1^{(2)} \Xobs_n, \: \bigl(
\beta_1^{(2)}\bigr)^2 \bigl(\tau^{(2)}
\bigr)^2 \bigr).
\end{align*}
The equation
\[
L_0 \bigl(\beta_0^{(1)} +
\beta_1^{(1)} x, \: \bigl(\beta_1^{(1)}
\bigr)^2 \bigl(\tau^{(1)}\bigr)^2 \bigr) =
L_0 \bigl(\beta_0^{(2)} +
\beta_1^{(2)} x, \: \bigl(\beta_1^{(2)}
\bigr)^2 \bigl(\tau^{(2)}\bigr)^2 \bigr)
\]
has at least four solutions. Then by Lemma~\ref{lem-eq-L0-sigs}
either
\[
\vec\beta^{(1)} = \vec\beta^{(2)} \quad\mbox{and}\quad \bigl(
\beta_1^{(1)}\bigr)^2 \bigl(\tau^{(1)}
\bigr)^2 = \bigl(\beta_1^{(2)}
\bigr)^2 \bigl(\tau^{(2)}\bigr)^2,
\]
or
%
\begin{equation}
\beta_1^{(1)} = \beta_2^{(2)} = 0
\quad\mbox{and}\quad L_0 \bigl(\beta_0^{(1)} ,
\: \bigl(\beta_1^{(1)}\bigr)^2 \bigl(
\tau^{(1)}\bigr)^2 \bigr) = L_0 \bigl(
\beta_0^{(2)} ,\: \bigl(\beta_1^{(2)}
\bigr)^2 \bigl(\tau^{(2)}\bigr)^2 \bigr) .
\label{eq-L1941}
\end{equation}
In the latter alternative,
\begin{gather*}
\bigl(\beta_1^{(1)}\bigr)^2 \bigl(
\tau^{(1)}\bigr)^2 = \bigl(\beta_1^{(2)}
\bigr)^2 \bigl(\tau^{(2)}\bigr)^2 = 0 \quad
\mbox{and}\quad \beta_0^{(1)} = \beta_0^{(2)}
\end{gather*}
since $L_0(b_0, 0) = \frac{1}{1 + \econst^{-b_0}}$ is a strictly increasing
function in $b_0$.
\end{proof}


\begin{thm}
\label{prop-indent-scalunk}
If in the structural model the distribution of $X_0$
is not concentrated at three \emph{(}or less\emph{)} points,
then the parameters $\vec\beta$ and $\beta_1^2\tau^2$ are identifiable.
\end{thm}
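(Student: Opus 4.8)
The plan is to reduce this structural statement to Lemma~\ref{lem-eq-L0-sigs} in exactly the way the functional Theorem~\ref{thm-indentif-func} does, the only genuinely new ingredient being that in the structural model the marginal law of $\Xobs$ is itself observable. Suppose two parameter sets $(\vec\beta^{(1)}, (\tau^{(1)})^2)$ and $(\vec\beta^{(2)}, (\tau^{(2)})^2)$, together with their respective laws of $\Xobs$, induce the same distribution of the observed couple $(\Xobs, Y)$. Since the law of $\Xobs$ is the first marginal of that observed distribution, the two laws of $\Xobs$ must coincide; denote the common law by $P$. By hypothesis $P$ is not concentrated at three or fewer points, so $\mathrm{supp}\,P$ contains at least four distinct points. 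Write $\sigma_i^2 := (\beta_1^{(i)})^2 (\tau^{(i)})^2$ for $i = 1, 2$.

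Equating the two expressions for $\Prob[Y = 1 \mid \Xobs = x]$ supplied by \eqref{eq:condprobYiXobsi}, I obtain
\[
L_0\bigl(\beta_0^{(1)} + \beta_1^{(1)} x,\ \sigma_1^2\bigr)
= L_0\bigl(\beta_0^{(2)} + \beta_1^{(2)} x,\ \sigma_2^2\bigr)
\]
for $P$-almost every $x$. The step that is new relative to the functional case is promoting this almost-sure equality to an equality holding at four concrete points of the support. Both sides are continuous in $x$, so the set where they differ is open and of $P$-measure zero, hence it meets no point of $\mathrm{supp}\,P$. Consequently the displayed equation holds at every point of $\mathrm{supp}\,P$, in particular at (at least) four distinct values of $x$.

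With four or more solutions in hand, Lemma~\ref{lem-eq-L0-sigs} forces one of its two exceptional cases. In case \eqref{eq:excase1} one reads off directly that $\vec\beta^{(1)} = \vec\beta^{(2)}$ and $\sigma_1^2 = \sigma_2^2$, that is $(\beta_1^{(1)})^2 (\tau^{(1)})^2 = (\beta_1^{(2)})^2 (\tau^{(2)})^2$. In case \eqref{eq:excase2} one has $\beta_1^{(1)} = \beta_1^{(2)} = 0$, whence $\sigma_1^2 = \sigma_2^2 = 0$ and the condition reduces to $L_0(\beta_0^{(1)}, 0) = L_0(\beta_0^{(2)}, 0)$; since $b_0 \mapsto L_0(b_0, 0) = 1/(1 + \econst^{-b_0})$ is strictly increasing, this gives $\beta_0^{(1)} = \beta_0^{(2)}$, and again $\vec\beta$ and $\beta_1^2 \tau^2$ agree for the two parameter sets. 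Either way $\vec\beta$ and $\beta_1^2 \tau^2$ are identifiable.

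Thus the substantive analytic work is done entirely by Lemma~\ref{lem-eq-L0-sigs}; the only obstacle specific to the structural setting is the measure-theoretic passage from $P$-a.e.\ equality of the regression functions to their equality at four points of the support, which rests on the continuity of $L_0$ in its first argument and on the definition of topological support.
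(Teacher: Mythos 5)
Your proof is correct and follows essentially the same route as the paper: equate the two expressions for $\Prob[Y=1\mid\Xobs]$, extract at least four solutions of the resulting $L_0$-equation, and invoke Lemma~\ref{lem-eq-L0-sigs} with the same treatment of its exceptional cases as in Theorem~\ref{thm-indentif-func}. The only cosmetic difference is your continuity-and-support argument for producing the four points, where the paper implicitly uses the simpler observation that a set of probability one must itself contain at least four points when the law of $\Xobs$ is not concentrated at three or fewer.
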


\begin{proof}
Suppose that there are two sets of parameters $(\vec\beta^{(1)},(\tau^{(1)})^2)$
and $(\vec\beta^{(2)},\allowbreak(\tau^{(2)})^2)$
for which the same bivariate distribution of $(\Xobs_1, Y_1)$ is obtained.
The random variable $\Prob[Y_1 = 1 \mid\Xobs_1]$ satisfies Eq.~\eqref
{eq:condprobYiXobsi} almost surely for each set of parameters.
Hence, the equality
\[
L_0 \bigl(\beta_0^{(1)} +
\beta_1^{(1)} \Xobs_1, \: \bigl(
\beta_1^{(1)}\bigr)^2 \bigl(\tau^{(1)}
\bigr)^2 \bigr) = L_0 \bigl(\beta_0^{(2)}
+ \beta_1^{(2)} \Xobs_1, \: \bigl(
\beta_1^{(2)}\bigr)^2 \bigl(\tau^{(2)}
\bigr)^2 \bigr)
\]
holds almost surely.
The rest of the proof is the same as in Theorem \ref{thm-indentif-func}.
\end{proof}


\appendix

\section[Differentiation of the function Lk]{Differentiation of $L_k(x,
\sigma^2)$}\label{Appendix-A}
Consider the sum of two independent random variables
$\zeta= \lambda+ \xi$, where $\lambda$ has the logistic distribution
\[
\Prob(\lambda\le x) = \frac{\exp(x)}{1+\exp(x)}, \quad x\in\mathbb{R},
\]
and $\xi\sim N(0,\sigma^2)$.
We allow $\sigma^2=0$, and then $\xi= 0$ almost surely.

The function $L_0(x, \sigma^2)$ defined in \eqref{eq:def-L0} is
the cdf of $\zeta$,
and the function $L_1(x, \sigma^2)$ defined in \eqref{eq:def-Lk} is
the pdf of $\zeta$.

The partial derivatives of $L_k(x, v)$ are
\begingroup\abovedisplayskip=12pt\belowdisplayskip=12pt
\[
\frac{\partial}{\partial x} L_k(x, v) = L_{k+1}(x, v), \qquad
\frac{\partial}{\partial v} L_k(x, v) = \frac{1}2 L_{k+2}(x,
v);
\]
see the proof in \cite[Section 2]{ShklyarS}.
The functions $L_k(x, v)$ are infinitely differentiable
and bounded on
$\mathbb{R} \times[0, +\infty)$.

Since the distribution of $\zeta$ is symmetric,
\[
L_k \bigl(-x, \sigma^2 \bigr) = (-1)^{k-1}
L_k \bigl(x, \sigma^2 \bigr), \quad k\ge1,
\]
that is, $L_1(x, \sigma^2)$ and $L_3(x, \sigma^2)$ are even functions
in $x$,
and $L_2(x, \sigma^2)$ and $L_4(x, \sigma^2)$ are odd functions in $x$.

\section{The key inequality}\label{Appendix-B}
The next lemma is similar to Lemma 2.1 in \cite{ShklyarS}.
Hence, the proof is brief; see \cite{ShklyarS} for details.
\begin{lemma}
Let $\xi$ and $\eta$ be two independent random variables, where
$\xi\sim N(0,1)$.
Denote $\zeta= \xi+ \eta$ and let $p_\zeta(z)$ be the pdf of $\zeta$.
Then
\[
\frac{\der^3}{\der z^3} \bigl(\ln p_\zeta(z) \bigr) = \mu_3[\eta
\mathrel| \zeta{=}z], \label{eq3-lem1}
\]
where $\mu_3[\eta\mid\zeta{=}z]$
is the third conditional central moment,
\[
\mu_3[\eta\mathrel| \zeta{=}z] = \ME \bigl[ \bigl(\eta- \ME[\eta
\mathrel| \zeta{=}z] \bigr)^3 \bigm| \zeta{=}z \bigr] .
\]
\end{lemma}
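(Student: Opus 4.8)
The plan is to recognize the conditional law of $\eta$ given $\zeta = z$ as a natural exponential family in the parameter $z$, so that the logarithmic derivatives of $p_\zeta$ become the successive conditional cumulants. First I would write the density of $\zeta$ as the Gaussian smoothing of the law of $\eta$,
\[
p_\zeta(z) = \ME\bigl[\phi(z - \eta)\bigr], \qquad \phi(t) = \frac{1}{\sqrt{2\pi}}\,\econst^{-t^2/2},
\]
and expand the kernel as $\phi(z-\eta) = \frac{1}{\sqrt{2\pi}}\,\econst^{-z^2/2}\,\econst^{z\eta}\,\econst^{-\eta^2/2}$ in order to isolate the dependence on $z$. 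Since given $\eta=y$ one has $\zeta\sim N(y,1)$, Bayes' rule shows that the conditional distribution of $\eta$ given $\zeta=z$ has density $\phi(z-\eta)/p_\zeta(z)$ with respect to the law of $\eta$, hence density proportional to $\econst^{z\eta}\econst^{-\eta^2/2}$; this is exactly an exponentially tilted family with natural parameter $z$.

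Next I would introduce the log-partition function
\[
K(z) = \ln \ME\bigl[\econst^{z\eta}\,\econst^{-\eta^2/2}\bigr] = \ln p_\zeta(z) + \frac{z^2}{2} + \frac{1}{2}\ln(2\pi).
\]
Because the tilted measure above is precisely the conditional law of $\eta$ given $\zeta=z$, the function $K$ is the cumulant generating function of that conditional distribution, and differentiating in $z$ produces its cumulants in order: $K'(z) = \ME[\eta \mid \zeta=z]$, $K''(z)$ is the conditional variance, and $K'''(z)$ is the third conditional cumulant. To finish, I would use the elementary fact that the third cumulant of any random variable coincides with its third central moment, so $K'''(z) = \mu_3[\eta \mid \zeta=z]$.

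It then remains to strip off the extra terms. The summands $z^2/2$ and $\tfrac12\ln(2\pi)$ have vanishing third derivative, so $K'''(z) = \frac{\der^3}{\der z^3}\ln p_\zeta(z)$, and combining this with $K'''(z)=\mu_3[\eta\mid\zeta=z]$ yields the asserted identity. Note that the $\econst^{-z^2/2}$ factor, which is precisely what the $z^2/2$ correction absorbs, disappears only because it is polynomial in $z$ after taking the logarithm; this is what makes the third order special and accounts for why the conclusion is a clean central moment rather than a cumulant-versus-moment correction.

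The only real obstacle is technical: justifying the interchange of the three $z$-derivatives with the expectation defining $p_\zeta$, equivalently the smoothness of $K$. This is guaranteed by the Gaussian factor, since $z\eta - \tfrac12\eta^2 = -\tfrac12(\eta - z)^2 + \tfrac12 z^2 \le \tfrac12 z^2$ makes $\ME[\econst^{z\eta}\econst^{-\eta^2/2}]$ finite for every $z$, and the derivatives $\eta^k\econst^{z\eta-\eta^2/2}$ are dominated, locally uniformly in $z$, by integrable functions of $\eta$; this also ensures that all moments of the conditional law exist. The argument parallels Lemma~2.1 of \cite{ShklyarS}, so I would keep this verification brief and refer the reader there for the details.
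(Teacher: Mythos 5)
Your proof is correct, but it reaches the identity by a genuinely different route than the paper. The paper proceeds by brute force: starting from $p_\zeta(z)=\frac{1}{\sqrt{2\pi}}\ME\,\econst^{-\frac12(z-\eta)^2}$, it differentiates $\ln p_\zeta$ three times via repeated quotient rules, obtaining a lengthy combination of tilted expectations $\ME\,\eta^k\econst^{-\frac12(z-\eta)^2}$, and only at the very end identifies the ratios $\ME\,\eta^k\econst^{-\frac12(z-\eta)^2}/\ME\,\econst^{-\frac12(z-\eta)^2}$ as conditional moments and the surviving combination $\ME[\eta^3\mid\zeta{=}z]-3\ME[\eta^2\mid\zeta{=}z]\ME[\eta\mid\zeta{=}z]+2(\ME[\eta\mid\zeta{=}z])^3$ as $\mu_3$. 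You instead observe up front that the conditional law of $\eta$ given $\zeta=z$ is an exponential tilting with natural parameter $z$ of the finite measure $\econst^{-y^2/2}\,\der P_\eta(y)$, so that $K(z)=\ln p_\zeta(z)+z^2/2+\tfrac12\ln(2\pi)$ is its log-partition function and every derivative $K^{(n)}(z)$ is the $n$th conditional cumulant; the third cumulant equals the third central moment, and the quadratic correction dies under three derivatives. Your route replaces a page of differentiation with a structural observation, gives the first and second derivative formulas of the paper for free as the lower cumulants (a useful consistency check), and generalizes immediately to all orders $n\ge 3$, where the paper's method would become increasingly painful. What it costs is a small amount of imported theory (smoothness of the log-partition function and the cumulant--central-moment identity $\kappa_3=\mu_3$), whereas the paper's computation is entirely self-contained; your domination argument for differentiating under the expectation, using $|y|^k\econst^{zy-y^2/2}$ bounded locally uniformly in $z$, is the right justification and is no weaker than what the paper itself tacitly relies on.
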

\endgroup
\begin{proof}\begingroup\abovedisplayskip=12pt\belowdisplayskip=12pt
We have
\[
p_\zeta(z) = \ME p_\xi(z-\eta) = \frac{1}{\sqrt{2\pi}} \ME
\econst^{-\frac{1}2 (z-\eta)^2} .
\]
Then
\begin{align}
p'_\zeta(z) &= \frac{1}{\sqrt{2\pi}} \ME \bigl[ (\eta- z)
\econst^{-\frac{1}2 (z-\eta)^2} \bigr],
\nonumber
\\[4pt]
\frac{\der}{\der z} \bigl(\ln p_\zeta(z) \bigr) &= \frac{p'_\zeta(z)}{p_\zeta(z)} =
\frac{\ME[(\eta- z) \econst^{-\frac{1}2 (z-\eta)^2}]}{
\ME\econst^{-\frac{1}2 (z-\eta)^2}} = \frac{\ME\eta\econst^{-\frac{1}2 (z-\eta)^2}}{
\ME\econst^{-\frac{1}2 (z-\eta)^2}} - z,
\nonumber
\\[4pt]
\frac{\der^2}{\der z^2} \bigl(\ln p_\zeta(z) \bigr) &= \frac
{\ME\eta^2 \econst^{-\frac{1}2 (z-\eta)^2}
\ME\econst^{-\frac{1}2 (z-\eta)^2} -
 ( \ME\eta\econst^{-\frac{1}2 (z-\eta)^2}  )^2}{
 ( \ME\econst^{-\frac{1}2 (z-\eta)^2}  )^2} -
1,
\nonumber
\\
\frac{\der^3}{\der z^3} \bigl(\ln p_\zeta(z) \bigr) &= \bigl(\ME
\econst^{-\frac{1}2 (z-\eta)^2} \bigr)^{-3}
\nonumber
\\
&\quad\times \bigl( \ME \bigl[\eta^2(\eta-z) \econst^{-\frac{1}2 (z-\eta)^2}
\bigr] \bigl(\ME\econst^{-\frac{1}2 (z-\eta)^2} \bigr)^2
\nonumber
\\
& \quad+ \ME\eta^2 \econst^{-\frac{1}2 (z-\eta)^2} \ME \bigl[(\eta- z)
\econst^{-\frac{1}2 (z-\eta)^2} \bigr] \ME \econst^{-\frac{1}2 (z-\eta)^2}
\nonumber
\\
&\quad- 2 \ME \bigl[\eta (\eta-z) \econst^{-\frac{1}2 (z-\eta)^2} \bigr] \ME\eta
\econst^{-\frac{1}2 (z-\eta)^2} \ME\econst^{-\frac{1}2 (z-\eta)^2}
\nonumber
\\
&\quad- 2 \ME\eta^2 \econst^{-\frac{1}2 (z-\eta)^2} \ME \econst^{-\frac{1}2 (z-\eta)^2}
\ME \bigl[(\eta-z) \econst^{-\frac{1}2 (z-\eta)^2} \bigr]
\nonumber
\\
& \quad+ 2 \bigl(\ME\eta\econst^{-\frac{1}2 (z-\eta)^2} \bigr)^2 \ME \bigl[(
\eta-z) \econst^{-\frac{1}2 (z-\eta)^2} \bigr] \bigr)
\nonumber
\\
& = \bigl(\ME\econst^{-\frac{1}2 (z-\eta)^2} \bigr)^{-3} \times \bigl( \ME
\eta^3 \econst^{-\frac{1}2 (z-\eta)^2} \bigl(\ME\econst^{-\frac{1}2 (z-\eta)^2}
\bigr)^2
\nonumber
\\
&\quad - 3 \ME\eta^2 \econst^{-\frac{1}2 (z-\eta)^2} \ME\eta
\econst^{-\frac{1}2 (z-\eta)^2} \ME\econst^{-\frac{1}2 (z-\eta)^2} + 2 \bigl(\ME\eta
\econst^{-\frac{1}2 (z-\eta)^2} \bigr)^3 \bigr) . \label{eq-de3p}
\end{align}\endgroup

If $\eta$ has a pdf, the conditional pdf of $\eta$ given $\zeta{=}z$ is
equal to
\[
p_{\eta|\zeta=z}(y) = \frac{p_\eta(y) \econst^{-\frac{1}2 (z-y)^2}}{
\ME\econst^{-\frac{1}2 (z-\eta)^2}} ;
\]
otherwise, we can use the conditional density of $\eta$ w.r.t.\@
marginal density
\[
\frac{\der\cdf_{\eta|\zeta=z}(y)}{
\der\cdf_{\eta}(y)} = \frac{\econst^{-\frac{1}2 (z-y)^2}}{
\ME\econst^{-\frac{1}2 (z-\eta)^2}} .
\]
Anyway, the conditional moments of $\eta$ given $\zeta{=}z$
are equal
to
%
\begin{equation}
\ME \bigl[\eta^k \bigm|\zeta{=}z \bigr] = \frac{\ME\eta^k \econst^{-\frac{1}2 (z-\eta)^2}}{
\ME\econst^{-\frac{1}2 (z-\eta)^2}}.
\label{eq-condmoments}
\end{equation}

From \eqref{eq-de3p} and \eqref{eq-condmoments} it follows that
\begin{align*}
\frac{\der^3}{\der z^3} \bigl(\ln p_\zeta(z) \bigr) &= \ME \bigl[
\eta^3 \bigm| \zeta{=}z \bigr] - 3 \ME \bigl[\eta^2
\bigm| \zeta{=}z \bigr] \ME[\eta \mathrel| \zeta{=}z] + 2 \bigl( \ME[\eta
\mathrel| \zeta{=}z] \bigr)^3
\\
&= \mu_3[\eta \mathrel| \zeta{=}z] . \qedhere
\end{align*}
\end{proof}

\begin{corollary}
\label{lemg-d2lnpm}
Let $\xi$ and $\eta$ be independent random variables such that
$\xi\sim N(\mu, \sigma^2)$.
Denote $\zeta= \xi+ \eta$,
and denote the pdf of $\zeta$ by $p_\zeta(z)$.
Then
\[
\frac{\der^3}{\der z^3} \bigl(\ln p_\zeta(z) \bigr) = \frac{1}{\sigma^6} \,
\mu_3[\eta\mid\zeta{=}z] .
\]
\end{corollary}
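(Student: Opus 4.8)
The plan is to reduce the statement to the already-proved Lemma by standardizing the Gaussian summand. First I would set $\tilde\xi = (\xi - \mu)/\sigma$, so that $\tilde\xi \sim N(0,1)$, and introduce the rescaled variables $\tilde\eta = \eta/\sigma$ and $\tilde\zeta = (\zeta - \mu)/\sigma = \tilde\xi + \tilde\eta$. Since $\tilde\xi$ and $\tilde\eta$ are independent and $\tilde\xi$ is standard normal, the Lemma applies verbatim to $\tilde\zeta$ and yields
\[
\frac{\der^3}{\der w^3}\bigl(\ln p_{\tilde\zeta}(w)\bigr) = \mu_3[\tilde\eta \mid \tilde\zeta = w].
\]

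Next I would transport this identity back to $\zeta$ along the affine change of variable $z = \mu + \sigma w$. The densities are related by $p_\zeta(z) = \sigma^{-1} p_{\tilde\zeta}((z-\mu)/\sigma)$, hence $\ln p_\zeta(z) = \ln p_{\tilde\zeta}((z-\mu)/\sigma) - \ln\sigma$. Differentiating three times in $z$ and applying the chain rule (each derivative contributes a factor $\der w/\der z = 1/\sigma$, while the additive constant $-\ln\sigma$ drops out) gives
\[
\frac{\der^3}{\der z^3}\bigl(\ln p_\zeta(z)\bigr) = \frac{1}{\sigma^3}\,\frac{\der^3}{\der w^3}\bigl(\ln p_{\tilde\zeta}(w)\bigr), \qquad w = \frac{z-\mu}{\sigma}.
\]

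It then remains to rewrite the conditional central moment. Because $\tilde\zeta$ is an invertible affine function of $\zeta$, conditioning on $\tilde\zeta = w$ is identical to conditioning on $\zeta = z$; and since $\tilde\eta = \eta/\sigma$, the cubic scaling of the third central moment gives $\mu_3[\tilde\eta \mid \tilde\zeta = w] = \sigma^{-3}\,\mu_3[\eta \mid \zeta = z]$. Combining the two factors of $\sigma^{-3}$ produces the claimed $\sigma^{-6}$.

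There is essentially no analytic obstacle here; the only thing to watch is the bookkeeping of the powers of $\sigma$. The factor $\sigma^{-6}$ arises as the product of two \emph{distinct} $\sigma^{-3}$ contributions: one from the chain rule applied to the third logarithmic derivative, and one from the cubic scaling of a central moment under the dilation $\eta \mapsto \eta/\sigma$. Conflating these, or overlooking that the derivative is of order three rather than one, would yield the wrong exponent.
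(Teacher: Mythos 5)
Your proposal is correct: the standardization $\tilde\xi=(\xi-\mu)/\sigma$, $\tilde\eta=\eta/\sigma$, $\tilde\zeta=(\zeta-\mu)/\sigma$ reduces the claim to the preceding lemma, and the two separate factors of $\sigma^{-3}$ (chain rule on the third logarithmic derivative, cubic scaling of the conditional central moment) are accounted for exactly as needed. The paper states the corollary without proof, and your rescaling argument is precisely the intended deduction.
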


\begin{lemma}
\label{lem-m3p}
Assume that the distribution of a random variable $X$ satisfies the following
conditions:
\begin{enumerate}
\renewcommand{\labelenumi}{{\rm\theenumi)}}
\item$X$ has a continuously differentiable density $p_X(x)$.
\item$X$ is unimodal in the following sense:
there exists a mode $\varmode\in\mathbb{R}$ such that for all $x\in
\mathbb{R}$,
we have the equality $\sign(p'_X(x)) = \sign(\varmode-x)$.
\item Whenever $x_1 < \varmode< x_2$ and $p_X(x_1) = p_X(x_2)$,
then $p_X(x_1) > - p_X(x_2)$.
\item$\ME|X|^3 < \infty$.
\end{enumerate}
Then $\mu_3(X) := \ME(X - \ME X)^3 > 0$.\vadjust{\goodbreak}
\end{lemma}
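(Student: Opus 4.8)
The plan is to pass to the level-set (coarea) description of the density and reduce the positivity of $\mu_3(X)$ to a correlation inequality. First I would use the shift-invariance of the third central moment to place the mode at the origin, $M=0$; conditions~1) and~2) then guarantee that for each height $c\in(0,H)$, $H:=p_X(0)$, the super-level set $\{p_X>c\}$ is an interval $(\ell(c),u(c))$ with $\ell(c)<0<u(c)$ and $p_X(\ell(c))=p_X(u(c))=c$. Write $s(c):=u(c)>0$ and $r(c):=-\ell(c)>0$; differentiating $p_X(u(c))=c$ and $p_X(\ell(c))=c$ implicitly shows (using condition~2), so that $p_X'\neq0$ off the mode) that both $s$ and $r$ are strictly decreasing and vanish as $c\uparrow H$. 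The decisive point is that $D(c):=s(c)-r(c)=u(c)+\ell(c)$ has derivative $D'(c)=\frac{p_X'(\ell)+p_X'(u)}{p_X'(u)\,p_X'(\ell)}$, whose denominator is negative; condition~3) (read as the comparison of the one-sided slopes at matched heights, $p_X'(\ell)>-p_X'(u)$) makes the numerator positive, so $D$ is positive and \emph{strictly decreasing} in $c$. This monotonicity of $D$ is the quantitative form of the skewness hypothesis and drives the whole argument; condition~4) is used only to make all moments below finite and the coarea boundary terms vanish.

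Next I would insert the coarea identity $\int\phi(x)p_X(x)\,dx=\int_0^H\!\int_{\ell(c)}^{u(c)}\phi(x)\,dx\,dc$ with $\phi(x)=(x-m)^3$, $m:=\ME X$, to get
\[
\mu_3(X)=\frac14\int_0^H\bigl[(s(c)-m)^4-(r(c)+m)^4\bigr]\,dc .
\]
Factoring the difference of fourth powers as $(s-r-2m)(s+r)\bigl[(s-m)^2+(r+m)^2\bigr]$ and introducing the probability measure $d\nu:=(s+r)\,dc$ on $(0,H)$ (which has mass $\int_0^H(s+r)\,dc=1$ and satisfies $\mathsf E_\nu D=\int_0^H(s^2-r^2)\,dc=2m$), the expression collapses to a single covariance,
\[
\mu_3(X)=\tfrac14\,\operatorname{Cov}_\nu\!\bigl(D,\,W\bigr),\qquad W:=(s-m)^2+(r+m)^2 .
\]

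It remains to prove this covariance is positive, and this is where the work lies. Setting $\Sigma:=s+r$ and using $s^2+r^2=\tfrac12(\Sigma^2+D^2)$, one has $W\equiv\tfrac12\Sigma^2+\tfrac12 D^2-2mD$ modulo a constant, so $\operatorname{Cov}_\nu(D,W)=\tfrac12\operatorname{Cov}_\nu(D,\Sigma^2)+\tfrac12\operatorname{Cov}_\nu(D,D^2)-2m\operatorname{Var}_\nu(D)$, and the negative mean-shift term $-2m\operatorname{Var}_\nu(D)$ is exactly the obstruction. I would defuse it in two moves. Since $\Sigma^2-D^2=4sr$ and $D$ and $sr$ are both nonincreasing in $c$, the correlation (Chebyshev/FKG) inequality for comonotone functions gives $\operatorname{Cov}_\nu(D,\Sigma^2)\ge\operatorname{Cov}_\nu(D,D^2)$; and the algebraic identity $\operatorname{Cov}_\nu(D,D^2)+\mu_3^\nu(D)=2\,\mathsf E_\nu\bigl[D\,(D-\mathsf E_\nu D)^2\bigr]$, where $\mu_3^\nu(D)$ is the third central moment of $D$ under $\nu$, together with $D\ge0$, makes the leftover combination manifestly nonnegative. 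Combining these,
\[
\mu_3(X)=\tfrac18\bigl[\operatorname{Cov}_\nu(D,\Sigma^2)+\mu_3^\nu(D)\bigr]\ge\tfrac14\,\mathsf E_\nu\bigl[D\,(D-\mathsf E_\nu D)^2\bigr]\ge0 ,
\]
and strictness follows because $D$ is strictly decreasing, hence non-constant under $\nu$, so the last expectation cannot vanish. I expect the genuine difficulty to be precisely this handling of $-2m\operatorname{Var}_\nu(D)$: the naive geometric consequence of condition~3) (``the right point is farther from the mode than the left at every height'') is by itself too weak, and it is the two comonotonicity estimates plus the rearrangement into $\mathsf E_\nu[D(D-\mathsf E_\nu D)^2]$ that upgrade it to positivity of the third central moment.
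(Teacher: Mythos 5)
Your argument is correct, but it takes a genuinely different route from the paper's. The paper also starts from the level-set representation, but only to establish the preliminary inequality $\ME X > M$; it then switches to the antisymmetrized density $f(t)=p_X(\ME X+t)-p_X(\ME X-t)$ centred at the \emph{mean}, uses conditions 2)--3) to show that on $(0,+\infty)$ either $f<0$ throughout or $f$ changes sign exactly once, from $-$ to $+$, at some $t_1>0$; it rules out the first alternative via the first-moment identity $\int_0^\infty t f(t)\,\der t=0$, and concludes by writing $\mu_3(X)=\int_0^\infty t(t^2-t_1^2)f(t)\,\der t$, whose integrand is pointwise positive. You instead stay in the level-set parametrization throughout: condition 3) (which, like the paper, you correctly read as the derivative inequality $p_X'(x_1)>-p_X'(x_2)$) is converted into strict monotonicity of the asymmetry $D(c)$ of the superlevel intervals, $\mu_3(X)$ becomes $\tfrac14\mathrm{Cov}_\nu(D,W)$ under the auxiliary measure $\der\nu=(s+r)\,\der c$, and positivity follows from Chebyshev's inequality for comonotone functions together with the identity $\mathrm{Cov}_\nu(D,D^2)+\mu_3^{\nu}(D)=2\ME_{\nu}[D(D-\ME_{\nu}D)^2]$. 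I checked the algebra: $\ME_{\nu}D=2m$, the collapse of $\tfrac12\mathrm{Cov}_\nu(D,D^2)-\ME_{\nu}D\cdot\mathrm{Var}_\nu(D)$ into $\tfrac12\mu_3^{\nu}(D)$, the comonotonicity of $D$ and $sr$, and the finiteness of all moments involved (via $\int_0^H(s^4+r^4)\,\der c=4\ME|X-M|^3$) all hold, and the strict positivity at the end is justified since $D>0$ and $D$ is non-constant. The paper's proof is the more elementary one --- a sign analysis plus one clever subtraction of $t_1^2\int_0^\infty tf(t)\,\der t$ --- whereas yours is more structural and yields the explicit lower bound $\mu_3(X)\ge\tfrac14\ME_{\nu}[D(D-\ME_{\nu}D)^2]$; both ultimately exploit the same geometric content of condition 3), namely that the asymmetry of the level sets shrinks as the height increases.
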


\begin{proof}\begingroup\abovedisplayskip=6pt\belowdisplayskip=6pt
\xch{1)}{1.} $\ME X > \varmode$.
Denote by $x_1(z)$ and $x_2(z)$ the solutions to the equation\break  $p_X(x)=z$
(see \xch{Fig.~\ref{fig:1}}{Fig.~\ref{fig:4xy}}):
\begin{align*}
x_1(z) < \varmode< x_2(z) &\quad{\mbox{if}}\ 0 < z<
\max (p_X);
\\
x_1(z) = \varmode= x_2(z) &\quad{\mbox{if}}\ z =
\max(p_X);
\\
p_X \bigl(x_1(z) \bigr) = p_X
\bigl(x_2(z) \bigr) = z &\quad{\mbox{if}} \ 0 < z \le
\max(p_X).
\end{align*}
Represent the expectation as a double integral and change the order of
integration:
\begin{align}
\ME X &= \varmode+ \int_{-\infty}^{\infty} (x-\varmode)
p_X(x)\, \der x
\nonumber
\\
&= \varmode+ \iint\limits
_{\{(x,z) \,|\, 0 \le z \le p_X(x)\}} (x-\varmode)\,\der x\,\der z
\nonumber
\\
&= \varmode+ \int_0^{\max(p_X)} \Biggl( \int
_{x_1(z)}^{x_2(z)} (x-\varmode) \, \der x \Biggr) \der z
\nonumber
\\
&= \varmode+ \int_0^{\max(p_X)} \frac{(x_2(z) - \varmode)^2 - (\varmode- x_1(z))^2}{2}\,
\der z . \label{eq-L617}
\end{align}

%
\begin{figure*}
\includegraphics{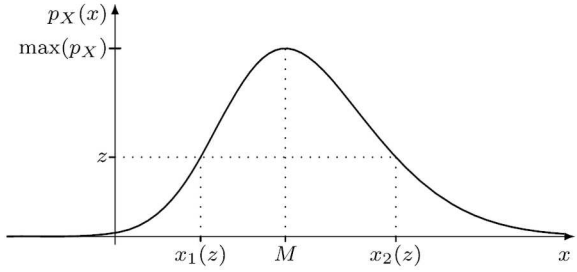}
\caption{To proof of Lemma~\ref{lem-m3p}, \xch{part~1)}{part~1}. Sample $p_X(x)$ and
definition of $x_1(z)$ and $x_2(z)$}\label{fig:1}
\end{figure*}

For all $x_2>\varmode$, by the implicit function theorem,
\[
\frac{\der}{\der x_2} x_1 \bigl(p_X(x_2)
\bigr) = \frac{p'_X(x_2)}{p'_X(x_1(p_X(x_2)))} > -1
\]
because $p_X(x_1(p_X(x_2))) = p_X(x_2)$ implies
$p'_X(x_1(p_X(x_2))) > -p'_X(x_2) > 0$.
Note that
$x_1(p_X(\varmode)) = \varmode$.
By the Lagrange theorem,
\[
x_1 \bigl(p_X(x_2) \bigr) = \varmode+
(x_2 - \varmode) \cdot \frac{\der}{\der x_3} x_1
\bigl(p_X(x_3) \bigr) \Big|_{x_3 = \varmode+ (x_2 - \varmode)\theta}
\]
for some $\theta\in(0,1)$;
\begin{align*}
x_1 \bigl(p_X(x_2) \bigr) &> \varmode-
(x_2 - \varmode) \quad\mbox{for}\ x_2 > \varmode;
\\
x_1(z) &> \varmode- \bigl(x_2(z) - \varmode \bigr)
\quad \mbox{for}\ 0<z<\max(p_X);
\\
x_2(z) - \varmode&> \varmode- x_1(z) > 0;
\\
\frac{(x_2(z) - \varmode)^2}{2} &> \frac{(\varmode- x_1(z))^2}{2};
\end{align*}
the last integrand in (\ref{eq-L617}) is positive,
and then (\ref{eq-L617}) implies $\ME X > \varmode$.

\xch{2)}{\paragraph{2}}
Consider the function
\[
f(t) = p_X(\ME X + t) - p_X(\ME X - t) ,
\]
which is odd and strictly decreasing on the interval $[-(\ME X -
\varmode),\; \ME X - \varmode]$.
Therefore, $f(t)$ attains 0 only once on this interval, that is, at the
point \xch{0 (see Fig. \ref{fig:2}).}{0.}

\begin{figure*}
\includegraphics{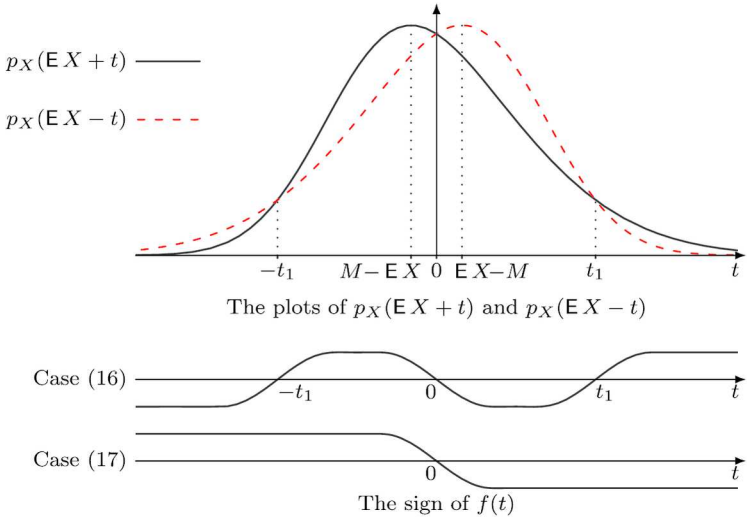}
\caption{To proof of Lemma~\ref{lem-m3p}, \xch{part~2)}{part~2}}\label{fig:2}
\end{figure*}

If $t> \ME X - \varmode$ (more generally, $|t| > \ME X - \varmode$)
and $f(t)=0$, then $f'(t) = p'_X(\ME X + t) + p'_X(\ME X - t) > 0$
by condition~{3)} of Lemma~\ref{lem-m3p}.
Therefore, $f(t)$ can attain $0$ only once on $(\ME X - \varmode,\;
+\infty)$,
and if it attains $0$ (say, at a~point $t_1 > \ME X - \varmode> 0$),
it is increasing in the neighborhood of $t_1$.

Hence, there may be two cases of sign changing of \xch{$f(t)$ (Fig. \ref{fig:2}).}{$f(t)$.} Either
%
\begin{equation}
\label{case-mp2p-2a} \exists t_1>0\ \forall x{\in}\mathbb{R} \; : \;
\sign \bigl(f(t) \bigr) = \sign(t) \sign\bigl(|t| - t_1\bigr),
\end{equation}
or
%
\begin{equation}
\label{case-mp2p-2b} \forall x{\in}\mathbb{R}\; : \; \sign \bigl(f(t) \bigr) = -
\sign(t) .
\end{equation}

\xch{3)}{\paragraph{3}}
We have
\begin{align}
0 &= \ME[X - \ME X] = \int_{-\infty}^\infty(x - \ME X)
p_X(x)\, \der x
\nonumber
\\
& = \int_{-\infty}^\infty t \, p_X(\ME X + t)
\, \der t
\nonumber
\\
& = \int_0^\infty t \, p_X(\ME X + t)
\, \der t + \int_0^\infty(-t) \, p_X(
\ME X - t)\, \der t
\nonumber
\\
& = \int_0^\infty t \,f(t) \, \der t, \label{eq-m3p-p3}
\end{align}
where $f(t)$ is defined in the second part of the proof.

Note that the case (\ref{case-mp2p-2b}) is impossible
because otherwise the last integrand in (\ref{eq-m3p-p3}) would be negative
and thus the integral could not be equal to 0.

\xch{4)}{\paragraph{4}}
Similarly to (\ref{eq-m3p-p3}),
\[
\ME(X - \ME X)^3 = \int_0^\infty
t^3 f(t) \, \der t.
\]
Subtract $t_1^2$ times Eq.~(\ref{eq-m3p-p3}), where
$t_1$ comes from (\ref{case-mp2p-2a}):
\[
\ME(X - \ME X)^3 = \int_0^\infty t
\bigl(t^2 - t_1^2 \bigr) f(t) \, \der t.
\]
The integrand is positive for $t>0$, $t\neq t_1$,
and hence $\mu_3[X] = \ME(X - \ME X)^3 > 0$.
\endgroup\end{proof}

\begin{lemma}
\label{lem-key-ineq4}
For all $x\in\mathbb{R}$ and $\sigma^2 \ge0$,
\begin{align*}
&\sign \bigl(L_4 \bigl(x,\sigma^2 \bigr)
L_1 \bigl(x,\sigma^2 \bigr)^2 - 3
L_3 \bigl(x, \sigma^2 \bigr) L_2 \bigl(x,
\sigma^2 \bigr) L_1 \bigl(x,\sigma^2 \bigr) +
2 L_2 \bigl(x, \sigma^2 \bigr)^3 \bigr)
\\
&\quad = \sign(x) .
\end{align*}
\end{lemma}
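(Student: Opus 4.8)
The plan is to recognize the bracketed cubic as $L_1^3$ times the third logarithmic derivative of the density. Abbreviating $p = L_1(\cdot,\sigma^2)$, so that by the differentiation rule $\partial_x L_k = L_{k+1}$ we have $L_2 = p'$, $L_3 = p''$, $L_4 = p'''$, a short computation of $(\ln p)'''$ gives
\[
p^3 (\ln p)''' = p''' p^2 - 3 p'' p' p + 2 (p')^3 ,
\]
which is exactly the expression inside $\sign(\cdot)$. Since $p = L_1$ is a probability density, $p>0$, and therefore the sought sign equals $\sign\bigl(\frac{\der^3}{\der x^3}\ln L_1(x,\sigma^2)\bigr)$.

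For $\sigma^2 > 0$ I would then invoke Corollary~\ref{lemg-d2lnpm} with the Gaussian summand $\xi\sim N(0,\sigma^2)$ and $\eta = \lambda$ the logistic variable, so that $\zeta = \lambda + \xi$ has density $L_1(\cdot,\sigma^2)$. This gives $\frac{\der^3}{\der x^3}\ln L_1(x,\sigma^2) = \sigma^{-6}\,\mu_3[\lambda\mid\zeta{=}x]$, reducing the lemma to the claim $\sign(\mu_3[\lambda\mid\zeta{=}x]) = \sign(x)$. The conditional law of $\lambda$ given $\zeta{=}x$ has density $q_x(\ell)\propto p_\lambda(\ell)\,\econst^{-(x-\ell)^2/(2\sigma^2)}$, where $p_\lambda(\ell)=\econst^\ell/(1+\econst^\ell)^2$ is the even logistic density. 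Since $q_0$ is even, $\mu_3=0$ at $x=0$; and since $q_{-x}(-\ell)=q_x(\ell)$, we have $\mu_3[\lambda\mid\zeta{=}{-}x]=-\mu_3[\lambda\mid\zeta{=}x]$, so it suffices to prove $\mu_3[\lambda\mid\zeta{=}x]>0$ for $x>0$.

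For $x>0$ I would obtain positivity from Lemma~\ref{lem-m3p} applied to $q_x$. Conditions~1) and~4) are immediate, the Gaussian factor making $q_x$ smooth with finite absolute moments. Condition~2) holds because $\ln q_x$ is strictly concave --- $(\ln p_\lambda)'(\ell) = -\tanh(\ell/2)$ is decreasing and the exponent is concave --- so $q_x$ is unimodal with a unique mode $M$; moreover $(\ln q_x)'(0)=x/\sigma^2>0$ forces $M>0$.

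The crux will be condition~3). Dividing $q_x'(\ell_1)+q_x'(\ell_2)>0$ by the common value $q_x(\ell_1)=q_x(\ell_2)>0$ and using the equal-density constraint to eliminate $(2x-\ell_1-\ell_2)/\sigma^2$ via $\ln p_\lambda(\ell_2)-\ln p_\lambda(\ell_1)=-\int_{\ell_1}^{\ell_2}\tanh(t/2)\,\der t$, condition~3) becomes the chord inequality
\[
\frac{1}{\ell_2-\ell_1}\int_{\ell_1}^{\ell_2}\tanh(t/2)\,\der t > \tfrac12\bigl(\tanh(\ell_1/2)+\tanh(\ell_2/2)\bigr),\qquad \ell_1<M<\ell_2 .
\]
For $0\le\ell_1$ this is just concavity of $t\mapsto\tanh(t/2)$ on $[0,\infty)$. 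For $\ell_1<0$ one first checks that the constraint $q_x(\ell_1)=q_x(\ell_2)$ together with $x>0$ forces $\ell_2>-\ell_1>0$ (because $q_x(-\ell_2)<q_x(\ell_2)$ while $q_x$ is increasing on $(-\infty,M]$); then, after cancelling the symmetric part of the integral by oddness of $\tanh$, the inequality reduces to $\int_a^b\tanh(t/2)\,\der t>\frac{a+b}{2}\bigl(\tanh(b/2)-\tanh(a/2)\bigr)$ with $0<a=-\ell_1<b=\ell_2$, which I would prove by differentiating in $b$ and invoking the strict estimate $\tanh(t/2)>\frac t2\,\mathrm{sech}^2(t/2)$ for $t>0$. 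Finally the degenerate case $\sigma^2=0$ is handled by direct computation: there $L_1(x,0)=p_\lambda(x)$ and $\frac{\der^3}{\der x^3}\ln p_\lambda(x)=\frac12\,\mathrm{sech}^2(x/2)\tanh(x/2)$, whose sign is $\sign(x)$.
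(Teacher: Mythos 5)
Your proposal is correct, and its overall architecture coincides with the paper's: both recognize the cubic as $L_1^3\,\frac{\der^3}{\der x^3}\ln L_1$, invoke Corollary~\ref{lemg-d2lnpm} to rewrite this as $\sigma^{-6}\mu_3[\eta\mid\zeta{=}x]$, reduce to $x>0$ by the symmetry $L_k(-x,\sigma^2)=(-1)^{k-1}L_k(x,\sigma^2)$, treat $\sigma^2=0$ by direct computation, and derive positivity of the conditional third central moment from Lemma~\ref{lem-m3p}, whose conditions 1), 2), 4) are checked in essentially the same way (your strict concavity of $\ln q_x$ is the paper's strict decrease of $F'$, and your mode $M$ is its $y_0$). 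The genuine divergence is in the verification of condition 3), which is the technical heart. The paper routes this through the auxiliary Lemma~\ref{lem-aux-L4L3L2L1}: it introduces the companion point $y_4$ with $F'(y_1)+F'(y_4)=0$, proves $F(y_1)>F(y_4)$ by integrating along the inverse function $(F')^{-1}$ and applying the Lagrange theorem, and then concludes $F'(y_1)+F'(y_2)>0$ from monotonicity of $F$ and $F'$. You instead divide the target inequality $q_x'(\ell_1)+q_x'(\ell_2)>0$ by the common density value and use the equal-density constraint to eliminate the Gaussian drift term, arriving at the explicit chord inequality
\begin{equation*}
\frac{1}{\ell_2-\ell_1}\int_{\ell_1}^{\ell_2}\tanh(t/2)\,\der t \;>\; \tfrac12\bigl(\tanh(\ell_1/2)+\tanh(\ell_2/2)\bigr),
\end{equation*}
settled by strict concavity of $\tanh$ on $[0,\infty)$ when $\ell_1\ge0$ and, when $\ell_1<0$, by the reduction to $0<a=-\ell_1<b=\ell_2$ (your observation that the constraint forces $\ell_2>-\ell_1$ is exactly part 2), $y_3+y_4>0$, of Lemma~\ref{lem-aux-L4L3L2L1}) followed by a one-variable monotonicity argument resting on $\sinh v>v$. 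Your route dispenses with Lemma~\ref{lem-aux-L4L3L2L1} entirely and is more elementary and computational at that step, at the cost of committing to the specific form of the logistic density; the paper's argument is more structural, never writing down $\tanh$, and would survive replacing the logistic law by any log-concave symmetric density with the analogous property of $F''$. Both arguments are complete.
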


Lemma~\ref{lem-aux-L4L3L2L1} is needed to prove Lemma~\ref
{lem-key-ineq4}. The notation $F(y)$ and $y_0$ is common for
Lemmas \ref{lem-key-ineq4} and \ref{lem-aux-L4L3L2L1}.

For fixed $x>0$ and $\sigma^2$, consider the function
%
\begin{equation}
\label{eq-def-F} F(y) = \ln \biggl( \frac{\econst^y}{(\econst^y + 1)^2} \biggr) -
\frac{(y-x)^2}{2\sigma^2} .
\end{equation}
Its derivative
\[
F'(y) = 1 - 2 \frac{\econst^y}{\econst^y + 1} - \frac{y-x}{\sigma^2}
\]
is strictly decreasing, and
\[
\lim_{y\to-\infty} F'(y) = +\infty, \qquad \lim
_{y\to+\infty} F'(y) = -\infty.
\]
Hence, $F'(y)$ attains $0$ at a unique point. Denote this point by
$y_0$, and then
%
\begin{equation}
\label{eq-def-y0} \sign \bigl(F'(y) \bigr) = - \sign(y -
y_0).
\end{equation}

\begin{lemma}
\label{lem-aux-L4L3L2L1}
For the function $F(y)$ defined in \eqref{eq-def-F},
for $y_0$ satisfying~\eqref{eq-def-y0},
and for $y_3$ and $y_4$ such that
$F'(y_3) + F'(y_4) = 0$ and $y_3 < y_4$,
we have the following inequalities:
\begin{enumerate}
\renewcommand{\labelenumi}{{\rm\theenumi)}}
\item$y_3 < y_0 < y_4$ and $F'(y_3) = -F'(y_4) > 0$.
\item$y_3 + y_4 > 0$\xch{.}{;}
\item\label{lem-aux-part3}$F''(y_3) < F''(y_4) < 0$\xch{.}{;}
\item$F(y_3) > F(y_4)$.\vadjust{\goodbreak}
\end{enumerate}
\end{lemma}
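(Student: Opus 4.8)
The plan is to reduce everything to an even/odd decomposition of $F$. Using $\frac{\econst^y}{(\econst^y+1)^2} = \frac{1}{4\cosh^2(y/2)}$ and expanding the square in \eqref{eq-def-F}, one writes $F(y) = C + Q(y) + \frac{x}{\sigma^2}\,y$ with a constant $C$ and the \emph{even} function $Q(y) = -2\ln\cosh(y/2) - \frac{y^2}{2\sigma^2}$. Hence $F'(y) = Q'(y) + \frac{x}{\sigma^2}$ with $Q'$ odd and strictly decreasing; $F''(y) = Q''(y) = -2g(y) - \frac{1}{\sigma^2}$, where $g(y) = \frac{\econst^y}{(\econst^y+1)^2}$ is the logistic density (even, unimodal at $0$), so $F''$ is even, strictly negative, and strictly increasing in $|y|$; and $F'''(y) = Q'''(y)$ is odd with $\sign(F'''(y)) = \sign(y)$. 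These three sign patterns drive the whole proof.

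For part 1), $F'' < 0$ makes $F'$ a strictly decreasing bijection with the unique zero $y_0$. Since $y_3 < y_4$ gives $F'(y_3) > F'(y_4)$, the hypothesis $F'(y_3) = -F'(y_4)$ forces $F'(y_4) < 0 < F'(y_3)$, whence $y_3 < y_0 < y_4$. For part 2), set $\phi := Q' = F' - \frac{x}{\sigma^2}$, which is odd and strictly decreasing; the hypothesis becomes $\phi(y_3) + \phi(y_4) = -\frac{2x}{\sigma^2} < 0$. If $y_3 + y_4 \le 0$, then $y_4 \le -y_3$ yields $\phi(y_4) \ge \phi(-y_3) = -\phi(y_3)$, that is $\phi(y_3)+\phi(y_4) \ge 0$, a contradiction; thus $y_3 + y_4 > 0$. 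For part 3), $F''(y_4) < 0$ is automatic, and $F''(y_3) < F''(y_4)$ is equivalent to $|y_3| < |y_4|$ because $F''$ is strictly increasing in $|\cdot|$; the latter follows from part 2, as $y_4 > -y_3$ gives $|y_4| > |y_3|$ regardless of the sign of $y_3$.

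The crux is part 4). A naive trapezoidal/convexity estimate of $\int_{y_3}^{y_4} F'\,\dert$ fails, since $F'''$ changes sign at $0$ and $F'$ is neither convex nor concave on $[y_3,y_4]$ once $y_3 < 0$. Instead I would symmetrize about the midpoint. With $m = \frac{y_3+y_4}{2}$ and $r = \frac{y_4-y_3}{2}$, fold the integral:
\[
F(y_4) - F(y_3) = \int_{-r}^{r} F'(m+t)\,\der t = \int_0^r R(t)\,\der t, \qquad R(t) := F'(m+t) + F'(m-t).
\]
The hypothesis says exactly $R(r) = F'(y_4) + F'(y_3) = 0$, so integration by parts gives $\int_0^r R(t)\,\der t = -\int_0^r t\,R'(t)\,\der t$. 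Now $R'(t) = F''(m+t) - F''(m-t)$, and since $F''$ is even and strictly increasing in $|\cdot|$, $\sign(R'(t)) = \sign(|m+t| - |m-t|)$; moreover $|m+t| - |m-t| = 2\min(m,t) > 0$ because $m > 0$ (part 2) and $t > 0$. Hence $R'(t) > 0$ on $(0,r]$, so $\int_0^r t\,R'(t)\,\der t > 0$, and therefore $F(y_4) - F(y_3) < 0$, that is $F(y_3) > F(y_4)$.

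The main obstacle is precisely this last step in the regime $y_3 < 0 < y_4$, where no global convexity of $F'$ is available. The resolution is to notice that the needed convexity information is carried by the \emph{even} second derivative $F''$ rather than by $F'$ itself, and that the midpoint positivity from part 2 is exactly what turns the identity $\int_0^r R = -\int_0^r tR'$ into a strictly signed quantity. Thus parts 1--3 are not mere preliminaries: part 2 supplies $m > 0$ and the decomposition supplies the evenness and monotonicity of $F''$, and together they unlock part 4).
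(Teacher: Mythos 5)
Your proof is correct. Parts 1)--3) follow essentially the paper's own route: the odd/even decomposition $F = C + Q + \frac{x}{\sigma^2}\,y$ is a clean repackaging of the paper's two observations that $F'(y)+F'(-y)=2x/\sigma^2$ and that $F''$ is even, negative, and increasing on $[0,+\infty)$, and your arguments for 1)--3) coincide with the paper's up to notation. Part 4) is where you genuinely diverge. The paper passes to the inverse function $(F')^{-1}$ and studies $G(t)=F((F')^{-1}(t))-F((F')^{-1}(-t))$, computes $G'(t)=t/F''((F')^{-1}(t))-t/F''((F')^{-1}(-t))$, applies part 3) to the whole one-parameter family of pairs with opposite slopes to get $G'(t)>0$, and concludes from $G(0)=0$ by the mean value theorem. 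You instead symmetrize about the midpoint $m=(y_3+y_4)/2$ and integrate by parts, reducing $F(y_4)-F(y_3)$ to $-\int_0^r tR'(t)\,\der t$ with $R'(t)=F''(m+t)-F''(m-t)$; the sign then comes from $m>0$ (part 2)) together with the evenness and monotonicity of $F''$ in $\lvert y\rvert$ via the identity $\lvert m+t\rvert-\lvert m-t\rvert=2\min(m,t)>0$, which I verified. In effect you bypass part 3) as an intermediate lemma for part 4) and use the underlying structural fact about $F''$ directly on pairs symmetric about $m$ (which, note, do not themselves satisfy the hypothesis $F'(a)+F'(b)=0$ except at the endpoints --- only $R(r)=0$ is used, and that is exactly the hypothesis). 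Your version avoids the inverse function entirely and is somewhat more elementary and self-contained; the paper's version makes part 3) carry the load explicitly. Both are valid proofs of the same statement.
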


\begin{proof}
\xch{1)}{1.} \textit{The inequality} \xch{$y_3 < y_0 < y_4$}{\mathversion{bold}$y_3 < y_0 < y_4$}
is a consequence of \eqref{eq-def-y0},
and \eqref{eq-def-y0} implies {\mathversion{normal}$F'(y_3) > 0$}.

\xch{2) $y_3 + y_4 > 0$.}{\paragraph{2. \mathversion{bold}$y_3 + y_4 > 0$}}
For all $y\in\mathbb{R,}$
%
\[
F'(y) + F'(-y) = \frac{2 x}{\sigma^2} > 0 .
\]
Since $F'(y_3) + F'(-y_3) > 0$ and $F'(y_3) + F'(y_4) = 0$,
we have $F'(-y_3) > F'(y_4)$, and then $-y_3 < y_4$ because the derivative
$F'(y)$ is decreasing.

\xch{3) $F''(y_3) < F''(y_4) < 0$.}{\paragraph{3. \mathversion{bold}$F''(y_3) < F''(y_4) < 0$}}
The second derivative
\[
F''(y) = \frac{-2 \econst^y}{(\econst^y + 1)^2} - \frac{1}{\sigma^2}
\]
is an even function strictly increasing on $[0, +\infty)$
and attaining only negative values.

The inequalities $y_3 < y_4$ and $y_3 + y_4 > 0$ can be rewritten as
$|y_3| < y_4$, and then
\[
F''(y_3) = F''\bigl(|y_3|\bigr)
< F''(y_4) < 0 .
\]

\xch{4) $F(x_3) > F (x_4)$.}{\paragraph{4. \mathversion{bold}$F(x_3) > F (x_4)$}}
Consider the inverse function
\[
\bigl(F' \bigr)^{-1}(t), \quad t {\in}\mathbb{R}.
\]
Its derivative is
\[
\frac{\der}{\der t} \bigl( \bigl(F' \bigr)^{-1} (t)
\bigr) = \frac{1}{F''((F')^{-1} (t))} < 0 .
\]
Then
\begin{align*}
\frac{\der}{\der t} \bigl( F \bigl( \bigl(F' \bigr)^{-1}
(t) \bigr) \bigr) &= \frac{F'((F')^{-1} (t))} {F''((F')^{-1} (t))} = \frac{t} {F''((F')^{-1} (t))} ;
\\
\frac{\der}{\der t} \bigl( F \bigl( \bigl(F' \bigr)^{-1}
(t) \bigr) - F \bigl( \bigl(F' \bigr)^{-1} (-t) \bigr)
\bigr) &= \frac{t}{F''((F')^{-1} (t))} + \frac{-t}{F''((F')^{-1} (-t))} .
\end{align*}

Apply already proven \xch{part \ref{lem-aux-part3})}{part \ref{lem-aux-part3}} of Lemma~\ref{lem-aux-L4L3L2L1}.
If $t>0$, then $(F')^{-1}(t) < (F')^{-1}(-t)$ (because $(F')^{-1}(t)$
is a decreasing function) and
$F'((F')^{-1} (t)) + F'((F')^{-1} (-t)) = t - t = 0$.
Then by \xch{part \ref{lem-aux-part3})}{part \ref{lem-aux-part3}}
\[
F'' \bigl( \bigl(F'
\bigr)^{-1} (t) \bigr) < F'' \bigl(
\bigl(F' \bigr)^{-1} (-t) \bigr) < 0, \quad t>0.
\]
Hence,
\[
\frac{\der}{\der t} \bigl( F \bigl( \bigl(F' \bigr)^{-1}
(t) \bigr) - F \bigl( \bigl(F' \bigr)^{-1} (-t) \bigr)
\bigr) > 0, \quad t>0 .
\]
Note that
\[
F \bigl( \bigl(F' \bigr)^{-1} (0) \bigr) - F \bigl(
\bigl(F' \bigr)^{-1} (-0) \bigr) = 0.
\]
By the Lagrange theorem, for $t>0$,
%
\begin{equation}
F \bigl( \bigl(F' \bigr)^{-1} (t) \bigr) - F \bigl(
\bigl(F' \bigr)^{-1} (-t) \bigr) = t \cdot
\frac{\der}{\der t_1} \bigl( F \bigl( \bigl(F' \bigr)^{-1}
(t_1) \bigr) - F \bigl( \bigl(F' \bigr)^{-1}
(-t_1) \bigr) \bigr) > 0, \label{eq-L868}
\end{equation}
where the derivative is taken at some point $t_1 \in(0,t)$.

Substituting $t=F'(y_3)>0$ (then $-t = F'(y_4)$), we obtain
\mbox{$F(y_3) - F(y_4) > 0$}.
\end{proof}

\begin{proof}[Proof of Lemma~\ref{lem-key-ineq4}]
\textit{Case 1. $x>0$ and $\sigma^2>0$.}
Recall that for fixed $\sigma^2$, $L_1(x,\sigma^2)$ is the pdf
of $\eta+ \xi$, where $\eta$ and $\xi$ are independent variables,
$\Prob(\eta<y) = \frac{\xch{\econst}{e}^y}{\xch{\econst}{e}^y+1}$ and $\xi\sim N(0,\sigma^2)$
(see Appendix~\ref{Appendix-A}).
By Corollary~\ref{lemg-d2lnpm},
%
\begin{equation}
\label{eq-L724} \frac{\der^3}{\der x^3} \bigl(\ln L_1 \bigl(x,
\sigma^2 \bigr) \bigr) = \frac{1}{\sigma^6} \mu_3[\eta
\mathrel| \eta{+}\xi{=}x],
\end{equation}
but
%
\begin{equation}
\label{eq-L730} \frac{\der^3}{\der x^3} \bigl(\ln L_1 \bigl(x,
\sigma^2 \bigr) \bigr) = \frac
{L_4 L_1^2 - 3 L_3 L_2 L_1 + 2 L_2^3} {L_1^3},
\end{equation}
where $L_k$ are evaluated at the point $(x,\sigma^2)$.
Since $L_1(x,\sigma^2) > 0$, we have to prove that
$\mu_3[\eta\mathrel| \eta{+}\xi{=}x] > 0$.
Therefore, we apply Lemma~\ref{lem-m3p}.

The pdf of the conditional distribution of $\eta$ given
$\eta+ \xi= x$ is equal to
\[
p_{\eta| \eta+ \xi= x}(y) = \frac{1}{
 \ME \econst^{- \frac{(\eta- x)^2}{2\sigma^2} } } \cdot \frac{\econst^y}{(1+\econst^y)^2}
\econst^{- \frac{(y-x)^2}{2\sigma^2}
} .
\]
The pdf $p_{\eta| \eta+ \xi= x} (y)$ is continuously differentiable.
The conditional distribution has a finite $k$th moment because
$y^k \econst^{-\frac{(y-x)^2}{2\sigma^2}}$ is bounded
for any $k \in\mathbb{N}$. Hence, 
conditions 1) and 4) of Lemma~\ref{lem-m3p} are satisfied.

Evaluate
%
\[
\ln p_{\eta| \eta+ \xi= x} (y) = \ln \biggl( \frac{\econst^y}{(\econst^y + 1)^2} \biggr) -
\frac{y - x}{2\sigma^2} - \ln \bigl( \ME\econst^{-\frac{(\eta-x)^2}{2\sigma^2}} \bigr) = F(y) + C,
\]
where the function $F(y)$ is defined in \eqref{eq-def-F}, and
$C = -\ln ( \ME\exp ( - \frac{(\eta- x)^2}{2\sigma^2} )
 )$
depends only on $x$ and $\sigma^2$ and does not depend on $y$.

We check condition 2) of Lemma~\ref{lem-m3p}:
\begin{align}
\nonumber
p_{\eta| \eta+ \xi= x}(y) &= \xch{\econst}{e}^{F(y) + C};
\\
\label{eq-dpdF} \frac{\der}{\der y} p_{\eta| \eta+ \xi= x}(y) &= F'(y)
\xch{\econst}{e}^{F(y) + C};
\\
\nonumber
\sign \biggl( \frac{\der}{\der y} p_{\eta| \eta+ \xi= x}(y) \biggr) &= \sign
\bigl(F'(y) \bigr) = - \sign(y-y_0),
\end{align}
and condition {2)} holds with $\varmode= y_0$,
where $y_0$ is defined just above \eqref{eq-def-y0}.

Now check condition of 3) of Lemma~\ref{lem-m3p}.
The proof is illustrated by Fig.~\ref{fig:3}.
Assume that $p_{\eta| \eta+ \xi= x} (y_1) = p_{\eta| \eta+ \xi=
x} (y_2)$
and $y_1 < y_0 < y_2$.
Then $F(y_1) = F(y_2)$.

Denote
\[
y_4 = \bigl(F' \bigr)^{-1}
\bigl(-F'(y_1) \bigr).
\]
Then
$F'(y_1) + F'(y_4) = F'(y_1) - F'(y_1) = 0$,
and by \eqref{eq-def-y0}, as $y_1 < y_0$, we have
$F'(y_1)>0$, $F'(y_4)<0$,
$y_4 > y_0 > y_1$.
By Lemma~\ref{lem-aux-L4L3L2L1}, $F(y_1) > F(y_4)$.
%
\begin{figure*}
\includegraphics{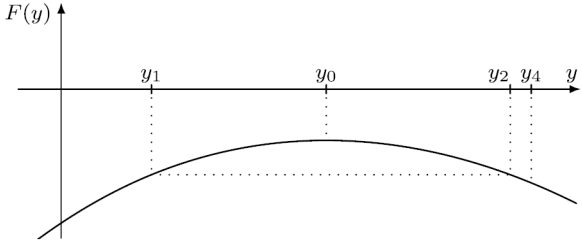}
\caption{To proof of Lemma~\ref{lem-key-ineq4}. Checking condition 3)~of
Lemma~\ref{lem-m3p}}\label{fig:3}
\end{figure*}

Hence, $F(y_2) = F(y_1) > F(y_4)$.
Because the function $F(y)$ is decreasing on
$( y_0, +\infty)$ (see \eqref{eq-def-y0}),
we have $y_2 < y_4$.
Since the function $F'(y)$ is decreasing,
$F'(y_2) > F'(y_4) = - F'(y_1)$, which implies $F'(y_1) + F'(y_2) > 0$.
By (\ref{eq-dpdF}) we have
$p'_{\eta| \eta+ \xi= x} (y_1) + p'_{\eta| \eta+ \xi= x} (y_2) >
0$.\vspace*{2pt}

All the conditions of Lemma~\ref{lem-m3p}
are satisfied. By Lemma~\ref{lem-m3p}, ${\mu_3[\eta\mathrel| \eta+\xi=x]}>0$,
and by (\ref{eq-L724})--(\ref{eq-L730}),
%
\begin{equation}
L_4 \bigl(x,\sigma^2 \bigr) L_1 \bigl(x,
\sigma^2 \bigr)^2 - 3 L_3 \bigl(x,
\sigma^2 \bigr) L_2 \bigl(x,\sigma^2 \bigr)
L_1 \bigl(x,\sigma^2 \bigr) + 2 L_2 \bigl(x,
\sigma^2 \bigr) > 0 \label{neq-L908}
\end{equation}
for all $x>0$ and $\sigma^2>0$.

\xch{\textit{Case 2. $x\le0$ and $\sigma^2 > 0$.}}{\paragraph{\mathversion{normal}Case 2. $x\le0$ and $\sigma^2 > 0$}}
The distribution of $\eta+ \xi$ is symmetric.
Hence, $L_1(x,\sigma^2)$ and $L_3(x,\sigma^2)$ are
even functions in $x$, and
$L_2(x,\sigma^2)$ and $L_4(x,\sigma^2)$ are
odd functions in $x$. Then
\[
L_4 \bigl(x,\sigma^2 \bigr) L_1 \bigl(x,
\sigma^2 \bigr)^2 - 3 L_3 \bigl(x,
\sigma^2 \bigr) L_2 \bigl(x,\sigma^2 \bigr)
L_1 \bigl(x,\sigma^2 \bigr) + 2 L_2 \bigl(x,
\sigma^2 \bigr)^3
\]
is an odd function in $x$. It is equal to 0 for $x=0$,
and it is negative for $x<0$ by Case~1; see (\ref{neq-L908}).

\xch{\textit{Case 3. $\sigma^2 = 0$.}}{\paragraph{\mathversion{normal}Case 3. $\sigma^2 = 0$}}
The function $L_1(x,0)$ is the pdf of the logistic distribution,
and $L_{k+1}(x,0)$ is its $k$th derivative:
\begin{align*}
L_1(x,0) &= \frac{\econst^x}{(1+\econst^x)^2}; \qquad L_2(x,0) =
\frac{\econst^x(1-\econst^x)}{(1+\econst^x)^3};
\\
L_3(x,0) &= \frac{\econst^x}{(1+\econst^x)^4} \bigl(1 - 4 \econst^x +
\econst ^{2x} \bigr);
\\
L_4(x,0) &= \frac{\econst^x(1-\econst^x)}{(1+\econst^x)^5} \bigl(1 - 10 \econst^x +
\econst^{2x} \bigr) .
\end{align*}
Then
\begin{align*}
L_4 L_1^2 - 3 L_3
L_2 L_1 + 2 L_2^3 &=
\frac{\xch{\econst}{e}^{3x} (1-\xch{\econst}{e}^x)} {(1+\xch{\econst}{e}^x)^9} \bigl(-2 \xch{\econst}{e}^x \bigr);
\\
\sign \bigl(L_4 L_1^2 - 3 L_3
L_2 L_1 + 2 L_2^3 \bigr) &=
\sign(x),
\end{align*}
where $L_k$ are evaluated at the point $(x,0)$.

Lemma~\ref{lem-key-ineq4} is proven.
\end{proof}
%

%

\end{document}